\newtheorem{theorem}{Theorem}
\newtheorem{lemma}{Lemma}
\newtheorem{corollary}{Corollary}
\theoremstyle{definition}
\newtheorem{example}{Example}
\theoremstyle{plain}
\newtheorem{proposition}{Proposition}
\def\S{\mathfrak S}
\def\diag{\mathrm{diag}}
\def\Det{\mathrm{Det}}
\def\L{\mathcal L}
\def\R{\mathbb R}
\title{Optimal designs for treatment comparisons represented by graphs}
\author{Samuel Rosa}
\affil{Faculty of Mathematics, Physics and Informatics, Comenius University, Bratislava, Slovakia}
\date{\today} 
\begin{document}
	
\maketitle

\begin{abstract}
	Consider an experiment consisting of a set of independent trials for comparing a set of treatments. In each trial, one treatment is chosen and the mean response of the trial is equal to the effect of the chosen treatment. We examine the optimal approximate designs for the estimation of a system of treatment contrasts under such model. These approximate treatment designs can be used to provide optimal treatment proportions for designs in more general models with nuisance effects (e.g., time trend, effects of blocks).
	For any system of pairwise treatment comparisons, we propose to represent such system by a graph. In particular, we represent the treatment designs for these sets of contrasts by the inverses of the vertex weights in the corresponding graph $G$. We show that then the positive eigenvalues of the information matrix of a treatment design are inverse to the positive eigenvalues of the vertex-weighted Laplacian of $G$. Note that such representation of treatment designs differs from the well known graph representation of block designs, which are represented by edges. We provide a graph-theoretic interpretation of the $D$-, $A$- and $E$-optimality for estimating sets of pairwise comparisons; as well as some optimality results for both the systems of pairwise comparisons and the general systems of treatment contrasts. Moreover, we provide a class of 'symmetric' systems of treatment contrasts for which the uniform treatment design is optimal with respect to a wide range of optimality criteria.
\end{abstract}

\section{Introduction}

Consider an experiment in which $v$ treatments are to be tested (possibly under the presence of some nuisance effects) and suppose that the aim of the experiment is to estimate a set of treatment contrasts. Some common examples of experiments with treatment and nuisance effects are blocking experiments (e.g., \cite{MajumdarNotz}), two-way elimination of heterogeneity (i.e., row-column designs, e.g., \cite{Jacroux}) and experiments under the presence of time trend (e.g., \cite{AtkinsonDonev}).

The positions of any two treatments in the considered system of contrasts need not be symmetric. For example, in an experiment of comparing test treatments with a control, the control clearly plays a special role, whilst the test treatments in the system of contrasts are interchangeable. In such situations it may be suboptimal to consider equireplicated designs, i.e., designs in which each treatment, including the control, has the same number of replications. In the experiment of comparing treatments with a control, it can be beneficial to employ control in more trials than any of the other treatments. Indeed, e.g., when one considers the $A$-, $E$- or $MV$-optimality, the optimal block designs for comparing test treatments with a control are not equireplicated, with the control replicated more times than the other treatments, e.g., see \cite{GiovagnoliWynn}, \cite{MajumdarNotz}, \cite{Jacroux87}. In general, for a specified optimality criterion, the optimal numbers of treatment replications depend on the chosen set of contrasts.

In this paper, we study the optimal treatment replications. Thus, we examine a model, in which the response in a given trial is determined by the treatment chosen for the trial and a random error. Such model can be described as a zero-way elimination of heterogeneity, a one-way analysis of variance, or a completely randomized experiment for $v$ treatments (where pre-specified replications of these treatments are randomly assigned to a given set of experimental units, e.g., see \cite{StallingsMorgan}). In such a model, the design of the experiment consists of choosing the 'best' numbers of treatment replications. As such, we call these designs the treatment designs. We study optimal approximate designs; therefore, rather than the actual numbers of treatment replications, we consider the treatment proportions, i.e., the relative numbers of trials that are to be performed with the particular treatments.

In \cite{RosaHarman16} it was shown that in a general model with treatment effects and nuisance effects, the optimal proportions of treatment replications are vital in obtaining optimal designs. In fact, the attainment of optimal treatment proportions is a necessary condition of optimality, i.e., given an optimality criterion $\Phi$, any $\Phi$-optimal approximate design in the general model with additive nuisance effects must allocate the $\Phi$-optimal treatment proportions to the particular treatments. Thus, finding the optimal treatment proportions can be thought of as the first step in obtaining optimal approximate designs under the presence of the nuisance effects (see \cite{RosaHarman16}) - the optimal treatment designs determine the optimal (relative) numbers of treatment replications, which are then assigned to the particular nuisance conditions (e.g., blocks, time moments). 

Nearly optimal or highly efficient treatment replications can be obtained by the standard rounding methods (e.g., Chapter 12 in \cite{puk}) from optimal approximate treatment designs. The advantage of the approximate designs is in obtaining general optimality results with simpler form, unlike the optimal exact designs, which often break down to multiple special cases. Then, the results on optimal approximate designs can provide an insight into the qualitative behaviour of optimal exact designs. Indeed, the earlier mentioned optimal exact block designs for comparing treatments with a control from \cite{MajumdarNotz} and \cite{Jacroux87} tend to imply treatment proportions similar or equal to the optimal approximate treatment proportions given by \cite{GiovagnoliWynn} or \cite{RosaHarman16}.
\bigskip

We primarily focus on a common class of treatment contrasts in which the aim of the experiment is to estimate a system of pairwise comparisons of treatments. We propose to represent such systems by graphs: the treatments are represented by vertices and the edges connecting pairs of vertices represent the particular pairwise comparisons. That is a rather straightforward relationship, but the graph representation can be exploited further, extensively employing the graph Laplacians.

It turns out that the treatment designs can then be expressed as (inverses of) the vertex weights; for an analysis of vertex-weighted graphs see \cite{ChungLanglands}. Then, we show that the positive eigenvalues of the information matrix for a given treatment design are inverses of the positive eigenvalues of the Laplacian of the corresponding vertex-weighted graph. It follows that for the treatment designs, any optimality criterion that depends only on the eigenvalues of the information matrix (which include the well-known Kiefer's $\Phi_p$-optimality criteria, including the $D$-, $A$- and $E$-optimality) can be expressed using the eigenvalues of the graph Laplacian. This observation allows one to express the optimal design problem using the graph terminology and consequently obtain optimality results based on such representation.
We provide an interpretation of the $D$-, $A$- and $E$-optimality using the graph terminology. Moreover, we obtain optimality results for these criteria employing the graph representation, and some optimality results for general systems of contrasts.

Although earlier we emphasized that the uniform treatment proportions (i.e., equireplicated designs) need not be optimal, there are many systems of contrasts which imply the optimality of uniform treatment proportions for a wide range of optimality criteria.
By employing some graph properties, we obtain a class of sets of pairwise comparisons in which the uniform treatment design is optimal with respect to any orthogonally invariant information function. We then extend these results to general sets of treatment contrasts.

It is well known that block designs can be represented by graphs (see, e.g., \cite{CameronVanLint}, \cite{Cheng81}, \cite{BaileyCameron}), which may seem similar to the representation proposed here. However, these representations differ; in many aspects, these representations are opposite, which will be demonstrated later.


\subsection{Notation}

By $1_n$ and $0_n$, we denote the vectors of length $n$ of all ones and of all zeroes, respectively. The symbols $0_{m \times n}$ and $J_{m \times n}$ denote the $m \times n$ matrix of zeroes and the $m \times n$ matrix of ones, respectively. The matrices $I_n$ and $J_n$ are the $n \times n$ indentity matrix and the $n \times n$  matrix of ones, respectively. Let $x \in \R^n$, then  we denote by $\mathrm{diag}(x)$ the diagonal matrix with elements of $x$ on its diagonal; furthermore, by $x^p$, $p \in \R$, we mean the vector of component-wise powers with the convention that $0^{-1}=0$. By $\mathfrak{S}^n_+$ and $\mathfrak{S}^n_{++}$, we denote the set of $n\times n$ non-negative definite and positive definite matrices, respectively.
We denote the column and the null space of a square matrix $A$ by $\mathcal{C}(A)$ and $\mathcal{N}(A)$, respectively. The trace of a matrix $A$ is denoted by $\mathrm{tr}(A)$. 
For $A \in \mathfrak{S}^n_+$, we denote its eigenvalues as $\lambda_1(A) \geq \ldots \geq \lambda_n(A)$. For clarity, we will also use $\lambda_{\max}(A)=\lambda_1(A)$ and $\lambda_{\min}(A)=\lambda_n(A)$. By $A^-$ and $A^+$, we denote the generalized inverse of $A$ and the Moore-Penrose pseudoinverse of $A$, respectively. For any $A \in \S^n_+$, we define $\Det(A)$ as the pseudo determinant of $A$  (i.e., the product of all non-zero eigenvalues of $A$); furthermore, we define $A_{ij}$ as the matrix obtained from $A$ by deleting its $i$-th row and $j$-th column. 
	
\subsection{The Model}

We consider the model of zero-way elimination of heterogeneity
\begin{equation}\label{eModel1}
Y_i=\tau_{u(i)}+\varepsilon_i, \quad i=1,\dots,N,
\end{equation}
where $Y_i$ is the response in the $i$-th trial, $u(i) \in \{1,\ldots,v\}$ is the treatment chosen for the $i$-th trial, $\tau:=(\tau_1, \ldots, \tau_v)^T$ is the vector of treatment effects and $\varepsilon_1, \ldots, \varepsilon_N$ are i.i.d. random errors with $E(\varepsilon_i)=0$ and $D(\varepsilon_i) = \sigma^2 < \infty$ for all $i=1, \ldots, N$.

Suppose that the aim of the experiment is to estimate a system of $s$ treatment contrasts $Q^T\tau$. A contrast is a linear combination whose coefficients sum to zero, thus, the $v \times s$ coefficient matrix $Q$ satisfies $Q^T 1_v = 0_s$. We denote the elements of $Q$ as $q_{ij}$ and its columns as $q_1, \ldots, q_s$. We denote $r:=\mathrm{rank}(Q)$ and we say that $Q^T\tau$ is a full-rank system if $s \leq v$ and $r = s$, otherwise we say that $Q^T\tau$ is rank deficient. 
We will assume that the experimenters are interested in all treatments, i.e., no row of $Q$ is $0_s^T$.

An example of a full-rank system is the system of comparisons with control, where $Q^T= (-1_{v-1}, I_{v-1})$, which aims at estimating $\tau_2-\tau_1, \ldots, \tau_v-\tau_1$. The set of centered contrasts $\tau_i - \bar{\tau}$, $i=1, \ldots, v$, where $\bar{\tau} = \sum_i \tau_i /v$, with the coefficient matrix $Q=I_v - J_v/v$, is rank deficient.
\bigskip

An exact design in model \eqref{eModel1} is a function $\xi: \{1, \ldots, v\} \to \{0,1,\ldots,N\}$ which determines for each treatment the number of trials assigned to that treatment. In this paper, we will examine approximate designs, e.g., see \cite{Pazman86} and \cite{puk}. An approximate design in model \eqref{eModel1} is a function $w: \{1,\ldots,v\} \to [0,1]$ that satisfies $\sum_{i=1}^v w(i) = 1$. Thus, the approximate design specifies the treatment proportions (weights), i.e., for each treatment $i$, the value $w(i)$ determines the proportion of all trials that are performed with that treatment. 
For brevity, we refer to the approximate designs simply as designs, and we usually represent a design $w$ as a vector $w=(w_1, \ldots, w_v)^T$. Since the approximate designs in \eqref{eModel1} determine the treatment proportions, we alternatively call them treatment designs.

The system $Q^T\tau$ is estimable under $w$ if and only if $\mathcal{C}(Q) \subseteq \mathcal{C}(M(w))$, where $M(w) = \mathrm{diag}(w_1, \ldots, w_v) = \diag(w)$ is the moment matrix of $w$, see \cite{puk}. If $Q^T\tau$ is estimable under $w$, we say that $w$ is feasible for $Q^T\tau$. Since no row of $Q$ is $0_s^T$, a design $w$ is feasible in \eqref{eModel1} if and only if $w>0$. The information matrix of a feasible design $w>0$ for estimating a full-rank system $Q^T\tau$ is $N_Q(w) = (Q^TM^{-1}(w)Q)^{-1}$, see \cite{puk}.

In the case of rank deficient systems, the information matrix is not well defined, and thus, following \cite{puk}, we define for a feasible design $w>0$ the matrix $C_Q(w) = (Q^TM^{-1}(w)Q)^+$, which is an analogue to the information matrix $N_Q(w)$.
\bigskip

Note that instead of \eqref{eModel1}, we could consider the model 
\begin{equation}\label{eModelAlt}
Y_i = \mu + \tau_{u(i)} + \varepsilon_i,\quad i=1,\ldots,N,
\end{equation}
where $\mu$ is the constant term. Then, the moment matrix of a design $w$ is
$$M(w) = \begin{bmatrix}
M_{11}(w) & M_{12}(w) \\ M_{12}^T(w) & M_{22}(w),
\end{bmatrix} $$
where $M_{11}(w) = \diag(w)$, $M_{12}(w) = w$ and $M_{22}(w) = 1$. Let $M_\tau(w):= M_{11}(w) - M_{12}(w) M_{22}^-(w) M_{12}^T(w)$ be the Schur complement of $M_{22}(w)$ in $M(w)$. Then, $w$ is feasible for $Q^T\tau$ in \eqref{eModelAlt} if and only if $\mathcal{C}(Q) \subseteq \mathcal{C}(M_\tau(w))$ and in that case $N_Q(w) = (Q^T M_\tau^-(w) Q)^{-1}$; see, e.g., \cite{RosaHarman16}. It is easy to check that this can be simplified to: $w$ is feasible if and only if $w>0$ and in that case $N_Q(w) = (Q^TM_{11}^{-1}(w)Q)^{-1}$, which coincides with the feasibility condition and the information matrix in model \eqref{eModel1}, respectively. Similarly, $C_Q(w) = (Q^TM_{11}^{-1}(w)Q)^+$.  It follows that in \eqref{eModelAlt}, any optimality results are the same as in model \eqref{eModel1}, i.e., the results obtained in the following sections for model \eqref{eModel1} hold also for model \eqref{eModelAlt} with the constant term.
\bigskip

Let $\Phi: \mathfrak{S}^s_{+} \to \R$ be an optimality criterion. Then, we say that $w^*$ is $\Phi$-optimal for a full-rank system $Q^T\tau$ or for a rank deficient system $Q^T\tau$ if it maximizes $\Phi(N_Q(w))$ or $\Phi(C_Q(w))$ over all $w>0$, respectively. An optimality criterion $\Phi$ is an information function (see \cite{puk}) if it is positively homogeneous, superadditive, Loewner isotonic, concave and upper semicontinuous. We say that $\Phi$ is orthogonally invariant if $\Phi(H) = \Phi(UHU^T)$ for any orthogonal matrix $U$. Note that $\Phi(H)$ is orthogonally invariant if and only if it depends only on the eigenvalues of $H$; for more details, e.g., see \cite{Harman04}. For brevity, we will usually write $\Phi(w)$ instead of $\Phi(N_Q(w))$ or $\Phi(C_Q(w))$.

The well-known class of Kiefer's optimality criteria $\Phi_p$, $p \in [-\infty,0]$, are orthogonally invariant information functions. If $H \in \mathfrak{S}^s_{++}$, then
$$
\Phi_p(H)=
\begin{cases}
\; \Big(\frac{1}{s} \sum\limits_{j=1}^{s} \lambda_j^p(H) \Big)^{1/p}, & p \in (-\infty, 0), \\
\; \Big(\prod\limits_{j=1}^{s} \lambda_j(H) \Big)^{1/s}, & p=0, \\
\; \lambda_\mathrm{min}(H), & p=-\infty.
\end{cases}
$$
For $p=0, -1$ and $-\infty$, we obtain the criteria of $D$-, $A$- and $E$-optimality, respectively.
The rank deficient versions of the Kiefer's optimality criteria are defined on the positive eigenvalues of $C_Q(w)$ (see Section 8.18 of \cite{puk}), i.e., on $\lambda_1(C_Q(w)), \ldots, \lambda_r(C_Q(w))$, where $r = \mathrm{rank}(Q)$.

For easier interpretation, we will alternatively use the criteria which are equivalent to the $\Phi_p$ criteria in the sense of the implied ordering of designs, but are to be \emph{minimized}. Let $V_Q(w):=Q^TM^{-1}(w)Q$, let $Q^T\tau$ be a full-rank or rank deficient system with $\mathrm{rank}(Q)=r$ and let $w>0$. Then,
$$
\Psi_p(w)=
\begin{cases}
	\; \sum\limits_{j=1}^{r} \lambda_j^{-p}(V_Q(w)), & p \in (-\infty, 0), \\
	\; \prod\limits_{j=1}^{r} \lambda_j(V_Q(w)) , & p=0, \\
	\; \lambda_\mathrm{max}(V_Q(w)), & p=-\infty.
\end{cases}
$$
In particular, $\Psi_{-1}(w)=\mathrm{tr}(V_Q(w))$, and $\Psi_0(w)=\Det(V_Q(w))$. A design is said to be $\Psi_p$-optimal for $Q^T\tau$ if it minimizes $\Psi_p(w)$ over all $w>0$. Then, a design is $\Psi_p$-optimal for $Q^T\tau$ if and only if it is $\Phi_p$-optimal for $Q^T\tau$.

\section{Graph representation}\label{sGraph}

\subsection{Graph theory}\label{ssGT}

We provide some basic terminology of graph theory; for more details, e.g., see \cite{Diestel}, \cite{CvetkovicEA} or \cite{Bapat}.
A directed graph $G$ is a pair $G=(V,E)$, where $V$ is a finite set of vertices $V=\{1,\ldots,v\}$, and $E$ is a set of edges $E=\{e_1,\ldots,e_s\}$, which are ordered pairs of vertices $e_k = (i_k, j_k)$, $i_k, j_k \in V$ for all $k$. If $e=(i,j)$, the edge $e$ is directed from $i$ to $j$. We will not allow loops or multiple edges between two vertices.

We say that two vertices $i, j \in V$ are adjacent, denoted as $i \sim j$, if there exists an edge $(i,j) \in E$ or $(j,i) \in E$. An edge $e$ is incident with a vertex $i$ if there exists a vertex $j \in V$ such that $e=(i,j)$ or $e=(j,i)$.
We say that a vertex $i$ has degree $d_i$, where $d_i$ is the number of edges incident with $i$. If all edges incident with a vertex $i$ are directed towards $i$, we say that $i$ is a sink vertex; if all edges incident with $i$ are directed from $i$, we say that $i$ is a source vertex.

A graph $G$ can be characterized by certain matrices.
The adjacency matrix $A$ of $G$ is a $v \times v$ matrix with its rows and colums indexed by $V$ and satisfying 
$$
A_{i,j}=
\begin{cases}
\; 1, & i \sim j, \\
\; 0, & \text{otherwise}.
\end{cases}
$$
The incidence matrix $R$ of $G$ is a $v \times s$ matrix with its rows and columns indexed by $V$ and $E$, respectively, that satisfies
$$
R_{i,e}=
\begin{cases}
\; 1, & e = (i,j) \text{ for some } j \in V, \\
\; -1, & e = (j,i) \text{ for some } j \in V,  \\
\; 0 &\text{otherwise}.
\end{cases}
$$
We say that the Laplacian matrix $L$ of $G$ is the $v \times v$ matrix $L:=RR^T$, i.e.,
$$
L_{i,j}=
\begin{cases}
\; d_i, & i=j, \\
\; -1, &i \sim j, \\
\; 0, &\text{otherwise}.
\end{cases}
$$
Note that $L = D - A$, where $D=\diag(d_1,\ldots,d_v)$ and hence $L$ does not depend on the orientation of the edges.

To specify that $V$ is a set of vertices of $G$, we may write $V(G)$ instead of $V$, similarly for $E(G)$, $A(G)$, and so forth.
\bigskip

Following \cite{ChungLanglands}, we say that $G$ is a graph with vertex weights $\alpha$ if $G$ is a graph and $\alpha$ is a function $\alpha: V \to \mathbb{R}_+$; for simplicity, we will write $\alpha_i:=\alpha(i)$ and $\alpha=(\alpha_1,\ldots,\alpha_v)^T$. Then, the vertex-weighted Laplacian $\mathcal{L}_{\alpha}$ (see \cite{ChungLanglands}) is a $v \times v$ matrix with its rows and columns indexed by $V$, satisfying
$$
\mathcal{L}_{\alpha;i,j}=
\begin{cases}
\; d_i\alpha_i, & i=j \\
\; -\alpha^{1/2}_i\alpha^{1/2}_j, &i \sim j \\
\; 0, &\text{otherwise},
\end{cases}
$$
which can be represented as $\mathcal{L}_\alpha=\diag(\alpha^{1/2}) RR^T\diag(\alpha^{1/2})$.
A more common version of a weighted graph is one with edge weights, resulting in an edge-weighted Laplacian $\tilde{L}$, which differs from the vertex-weighted version, see, e.g., \cite{Merris94}, \cite{Merris95}. Note that the often used normalized Laplacian (see, e.g., \cite{Chung},  \cite{CvetkovicEA}) is a special case of the vertex-weighted Laplacian, with vertex weights equal to the inverses of the degrees of the vertices, $\alpha_i=d_i^{-1}$.
\bigskip

An automorphism of $G$ is a bijection $\pi: V \to V$ satisfying $(i,j) \in E$ if and only if $(\pi(i),\pi(j)) \in E$. Since $\pi$ is a bijection, it is in fact a permutation of the vertices. Consider the permutation of vertex labels given by a permutation $\pi$. By $P_\pi$ we denote the $v\times v$ permutation matrix given by $\pi$, i.e., $P\pi$ satisfies $P_\pi x = \pi(x)$ for any $x \in \R^v$. Then, under the relabelling given by $\pi$, $R$ changes to $P_\pi R$, $A$ changes to $P_\pi A P_\pi^T$, and the vector of vertex weights $\alpha$ changes to $P_\pi \alpha$. It follows that if $\pi$ is an automorphism, $P_\pi R = R$ and $P_\pi A P_\pi^T = A$; thus, $P_\pi RR^T P_\pi^T = RR^T$ and the vertex-weighted Laplacian $\L_\alpha$ changes to $P_\pi\diag(\alpha^{1/2}) P_\pi^T RR^T P_\pi \diag(\alpha^{1/2}) P_\pi^T = P_\pi \L_\alpha P_\pi^T$.

Let $\pi$ be a permutation on $V$. A cycle $c$ is a sequence of the form $(i, \pi(i), \ldots, \pi^{k-1}(i))$, where $i \in V$ and $k$ is the smallest number such that $\pi^k(i)=i$. For any $j \in V$, we say that the cycle $c=(i, \pi(i), \ldots, \pi^{k-1}(i))$ contains $j$, denoted as $j \in c$, if there exists $m \in \mathbb{N}$ such that $\pi^m(i)=j$.
Then, any permutation $\pi$ can be decomposed into its cycles $c_1, \ldots, c_K$. By a cyclic permutation $\pi$, we mean a permutation that consists of only one cycle, i.e., for any $i,j \in V$ there exists $m \in \mathbb{N}$ such that $\pi^m(i)=j$.
For example, the permutation $\pi_1(1)=2$, $\pi_1(2)=1$ and $\pi_1(3)=3$ can be expressed as $\pi_1=(1,2)(3)$ and the permutation $\pi_2(1)=2$, $\pi_2(2)=3$,  $\pi_2(3)=1$ is a cyclic permutation $\pi_2=(1,2,3)$.

\subsection{Representation of treatment proportions}

A common class of systems of contrasts for treatment comparisons are systems of pairwise comparisons $\tau_j - \tau_i$, which are obtained when all the columns $q_k$ of $Q$ satisfy $q_k^T\tau = \tau_{j_k} - \tau_{i_k}$ for some $i_k, j_k \in \{1,\ldots,v\}$. We propose to represent any such system by a graph $G=(V,E)$, where $V$ is the set of treatments and $e = (j,i) \in E$ if and only if $\tau_j - \tau_i$ is present in $Q^T\tau$. Intuitively, the representation is rather straightforward - the  vertices represent the treatments and the edges represent the pairwise comparisons: if there is a comparison $\tau_j-\tau_i$ in $Q^T\tau$, then the corresponding vertices are connected by an edge directed from $j$ to $i$.
Then, it is easy to see that the incidence matrix $R$ coincides with the coefficient matrix $Q$, i.e., $R = Q$. Therefore, any system of pairwise comparisons $Q^T\tau$ can be represented by a directed graph with the incidence matrix given by $Q$.

\begin{example}\label{exSpecialContrasts}
	Let $v=7$ and suppose that the aim of the experiment is to estimate the following treatment comparisons: $\tau_2-\tau_1$, $\tau_3-\tau_2$, $\tau_4-\tau_3$, $\tau_5-\tau_3$, $\tau_6-\tau_5$, $\tau_7-\tau_6$. Such system of contrasts was considered in \cite{Mead90}. We denote these contrasts as $Q_1^T\tau$ and in the following sections we will use them to demonstrate our results. The matrix $Q_1$ is of the form
	$$ 
	Q_1=\begin{bmatrix}
	-1 &  0 &  0 &  0 &  0 &  0 \\
	1 & -1 &  0 &  0 &  0 &  0 \\
	0 &  1 & -1 & -1 &  0 &  0 \\
	0 &  0 &  1 &  0 &  0 &  0 \\
	0 &  0 &  0 &  1 & -1 & -1 \\
	0 &  0 &  0 &  0 &  1 &  0 \\
	0 &  0 &  0 &  0 &  0 &  1
	\end{bmatrix}.
	$$
	The corresponding graph $G_1$ (the graph in Figure \ref{fSpecialContrasts}, disregarding the vertex weights) allows for a clear representation of the system of contrasts.
\end{example}

\begin{figure}[h]
	\centering
	\epsfig{file=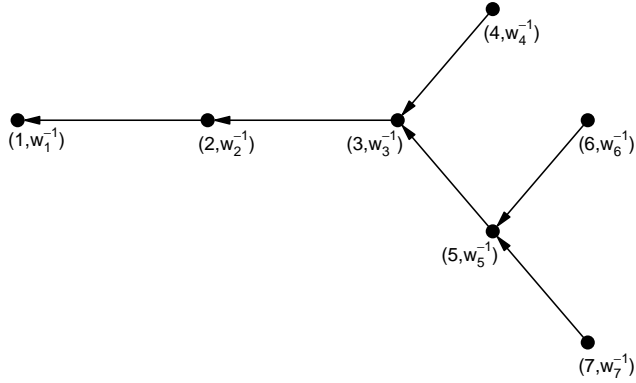, height=7cm}
	\vspace{-30pt}
	\caption{Graph representiation of the system of pairwise comparisons  $\tau_2-\tau_1$, $\tau_3-\tau_2$, $\tau_4-\tau_3$, $\tau_5-\tau_3$, $\tau_6-\tau_5$, $\tau_7-\tau_6$ and a design $w>0$ for this system of contrasts. The labels of the vertices are of the form $(i,\alpha_i)$, where $i$ is the index of the vertex and $\alpha_i=w_i^{-1}$ is the vertex weight.}
	\label{fSpecialContrasts}
\end{figure}

Note that a system of $v-1$ pairwise comparisons $Q^T\tau$ has full rank if and only if the corresponding graph is a tree (a connected graph without cycles; equivalently, a connected graph with $v-1$ edges). It follows from the fact that the corresponding graph is connected if and only if $\mathrm{rank}(L)=v-1$ (e.g., \cite{Mohar}). Furthermore, $\mathrm{rank}(Q) = \mathrm{rank}(QQ^T) = \mathrm{rank}(L)$ and the number of treatment contrasts is equal to the number of edges, i.e., $v-1$. Clearly, the graph $G_1$ in Example \ref{exSpecialContrasts} is a tree and thus the system of contrasts attains the full rank $v-1$.

The previous simple observation suggests that the Laplacian may be a useful tool in examining the systems of pairwise comparisons of treatments. Indeed, it is the case.
For a feasible design $w>0$ consider the vertex weights $\alpha$ given by the inverse values of $w$, i.e., $\alpha_i = w_i^{-1}$. Then, $\mathcal{L}_w = M^{-1/2}(w)QQ^TM^{-1/2}(w)$; recall that $M(w) = \diag(w)$. Note the slight abuse of notation, where $\L_w$ should in fact be expressed as $\L_\alpha$, where $\alpha=w^{-1}$ or as $\L_{w^{-1}}$.

Let $\Phi$ be an orthogonally invariant information function. Then, in the full-rank case, we may express $\Phi(w)$ as $\phi(\lambda_1(N_Q(w)), \ldots, \lambda_s(N_Q(w)))$ for some function $\phi$; in the rank deficient case, we have  $\Phi(w)=\phi(\lambda_1(C_Q(w)), \ldots, \lambda_r(C_Q(w)),0,\ldots,0)$. The following theorem shows that the value of $\Phi(w)$ is determined by the spectrum of the Laplacian $\mathcal{L}_w$ of the corresponding vertex-weighted graph.

\begin{theorem}\label{tEigLap}
	Let $\Phi$ be an orthogonally invariant information function and let $w$ be a feasible treatment design for estimating a system of pairwise comparisons $Q^T\tau$. Then, if $Q^T\tau$ is a full-rank system, $\Phi(w) = \phi(1/\lambda_s(\mathcal{L}_w), \ldots, 1/\lambda_1(\mathcal{L}_w))$ and if $Q^T\tau$ is rank deficient, $\Phi(w) = \phi(1/\lambda_r(\mathcal{L}_w), \ldots, 1/\lambda_1(\mathcal{L}_w),0,\ldots,0)$.
\end{theorem}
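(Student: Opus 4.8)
The plan is to reduce everything to the elementary fact that the two Gram-type products $BB^T$ and $B^TB$ share the same nonzero eigenvalues. Set $B := M^{-1/2}(w)Q$, which is well-defined since feasibility gives $w>0$ and hence $M(w)=\diag(w)$ is positive definite. From the identity recorded just before the theorem, $\mathcal{L}_w = M^{-1/2}(w)QQ^TM^{-1/2}(w) = BB^T$, while by definition $V_Q(w) = Q^TM^{-1}(w)Q = (M^{-1/2}(w)Q)^T(M^{-1/2}(w)Q) = B^TB$. The $v\times v$ matrix $\mathcal{L}_w$ and the $s\times s$ matrix $V_Q(w)$ therefore carry identical lists of nonzero eigenvalues, counted with multiplicity. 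Since $M^{-1/2}(w)$ is invertible, $\mathrm{rank}(\mathcal{L}_w)=\mathrm{rank}(B)=\mathrm{rank}(Q)=r$, so $\mathcal{L}_w$ has exactly the $r$ positive eigenvalues $\lambda_1(\mathcal{L}_w)\geq\cdots\geq\lambda_r(\mathcal{L}_w)>0$ together with $v-r$ zero eigenvalues, and these $r$ positive values are precisely the positive eigenvalues of $V_Q(w)$.

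Next I would pass from $V_Q(w)$ to the information matrix by inverting eigenvalues, tracking the (descending) ordering carefully. In the full-rank case $r=s$, so $V_Q(w)=B^TB$ is nonsingular and $N_Q(w)=V_Q(w)^{-1}$; inverting an eigenvalue reverses the order, so $\lambda_j(N_Q(w))=1/\lambda_{s+1-j}(\mathcal{L}_w)$ for $j=1,\dots,s$, which gives $\Phi(w)=\phi(\lambda_1(N_Q(w)),\dots,\lambda_s(N_Q(w)))=\phi(1/\lambda_s(\mathcal{L}_w),\dots,1/\lambda_1(\mathcal{L}_w))$ by orthogonal invariance. In the rank-deficient case $r<s$ and $C_Q(w)=V_Q(w)^+$; the Moore--Penrose pseudoinverse inverts the $r$ positive eigenvalues of $V_Q(w)$ and leaves the zero eigenvalues fixed, so the positive eigenvalues of $C_Q(w)$ are $\lambda_j(C_Q(w))=1/\lambda_{r+1-j}(\mathcal{L}_w)$ for $j=1,\dots,r$, while the remaining $s-r$ eigenvalues vanish. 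Substituting into $\Phi(w)=\phi(\lambda_1(C_Q(w)),\dots,\lambda_r(C_Q(w)),0,\dots,0)$ then yields the claimed expression.

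The computation is short, so there is no single hard step; the care lies in the bookkeeping. The one thing to verify cleanly is the order-reversal under inversion together with the correct assignment of multiplicities, i.e.\ that the $j$-th largest eigenvalue of $\mathcal{L}_w$ matches the $j$-th smallest positive eigenvalue of $N_Q(w)$ or $C_Q(w)$; this is exactly what produces the reversed index pattern $1/\lambda_s,\dots,1/\lambda_1$ (respectively $1/\lambda_r,\dots,1/\lambda_1$) in the statement. I would also make explicit that orthogonal invariance of $\Phi$ is what licenses writing $\Phi(w)$ as the symmetric function $\phi$ of the eigenvalue list, so that only the multiset of (positive) eigenvalues matters and the values shared by $\mathcal{L}_w$ and $V_Q(w)$ may be substituted directly.
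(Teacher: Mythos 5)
Your proof is correct and follows essentially the same route as the paper's: both rest on the fact that $B^TB$ and $BB^T$ share their nonzero eigenvalues with $B=M^{-1/2}(w)Q$, combined with order-reversal under (pseudo)inversion and the orthogonal invariance of $\Phi$. Your version merely spells out the rank bookkeeping ($\mathrm{rank}(\mathcal{L}_w)=r$) and the index shifts $\lambda_j(N_Q(w))=1/\lambda_{s+1-j}(\mathcal{L}_w)$, resp.\ $\lambda_j(C_Q(w))=1/\lambda_{r+1-j}(\mathcal{L}_w)$, more explicitly than the paper does.
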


\begin{proof}
	Consider the full-rank case. Since $\Phi$ depends only on the eigenvalues of $N_Q$, we may write $\Phi(w)=\phi(\lambda_1(N_Q(w)), \ldots, \lambda_s(N_Q(w))) = \phi(1/\lambda_s(V_Q(w)), \ldots, 1/\lambda_1(V_Q(w)))$, because $\lambda_i(N_Q(w)) = 1/\lambda_{s-i+1}(V_Q(w))$. Moreover, it is well-known that a matrix $X^TX$ has the same positive eigenvalues, including multiplicities, as $XX^T$ (e.g., 6.54(c) in \cite{Seber}). Defining $X=M^{-1/2}(w)Q$ yields that $\Phi(w)=\phi(1/\lambda_s(\mathcal{L}_w), \ldots, 1/\lambda_1(\mathcal{L}_w))$, because $\mathcal{L}_w = XX^T$. Since the positive eigenvalues of $V_Q(w)$ are inverses of the positive eigenvalues of $C_Q(w)=V_Q^+(w)$, analogous results hold in the rank deficient case.
\end{proof}

Theorem \ref{tEigLap} shows that not only a system of pairwise comparisons can be expressed as a graph $G$, but also that a feasible treatment design $w>0$ can be represented by vertex weights on $G$ given by the \emph{inverse} values of $w$. For example, the system of contrasts for comparing test treatments with $g$ controls $\tau_j-\tau_i$, $i \in \{1,\ldots,g\}$, $j \in \{g+1,\ldots,v \}$ (e.g., see \cite{Majumdar86}) can be represented by a complete bipartite graph with partitions $\{1,\ldots,g\}$ and $\{g+1,\ldots,v\}$ (see Figure \ref{fCwC}), and the graph for all pariwise comparisons $\tau_j-\tau_i$, $j>i$ (e.g., \cite{BaileyCameron}), is a complete graph (see Figure \ref{fAPC}). The vertex weights implied by a design $w>0$ for the system of contrasts $Q_1^T\tau$ from Example \ref{exSpecialContrasts} are represented in Figure \ref{fSpecialContrasts}.

\begin{figure}[h]
	\centering
	\begin{subfigure}[b]{0.4\textwidth}
		\epsfig{file=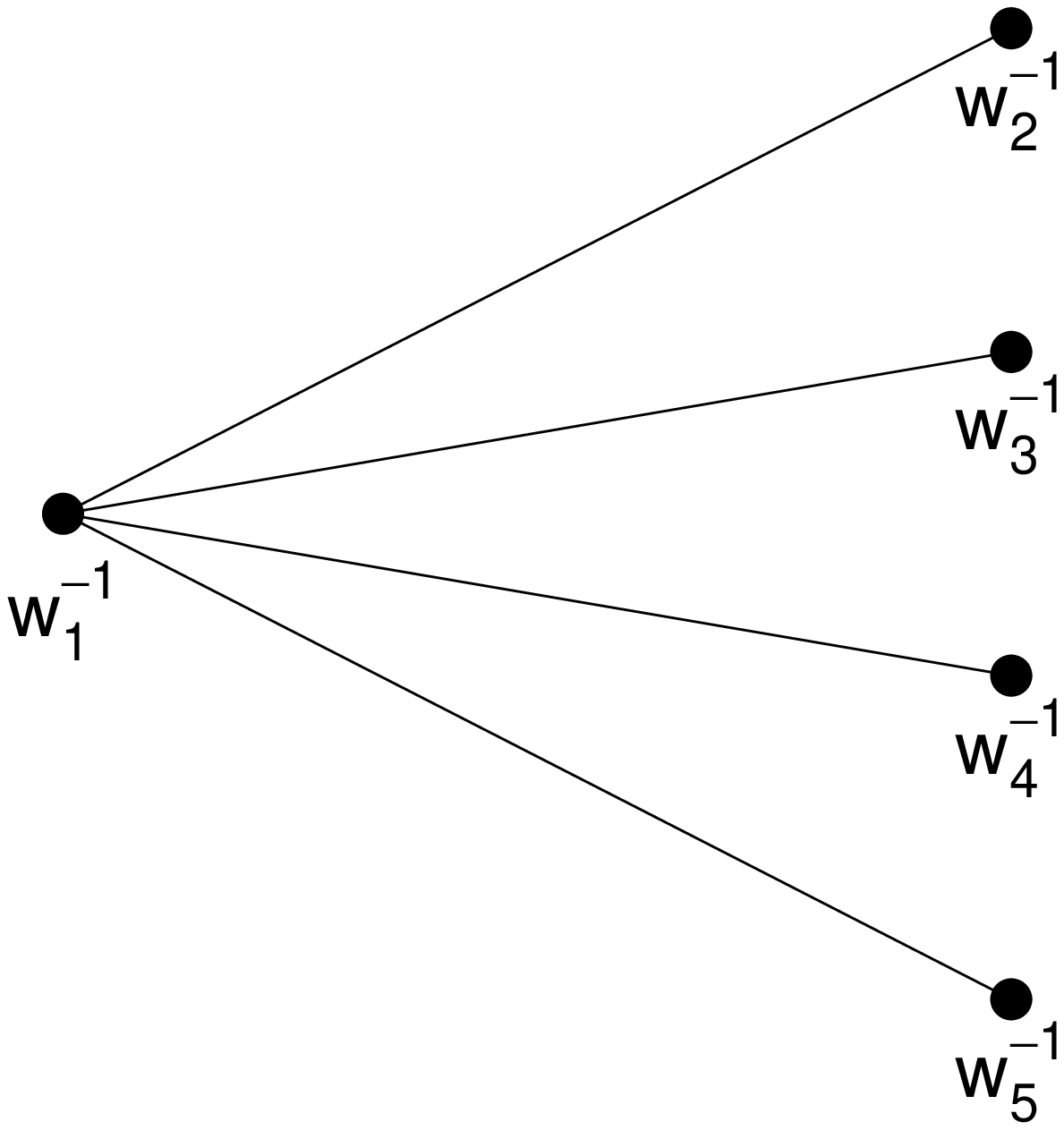, height=7cm}
		\vspace{-40pt}
		\caption{Comparison with one control}
		\label{fCwC}
	\end{subfigure}
	~ 
	\begin{subfigure}[b]{0.4\textwidth}
		\epsfig{file=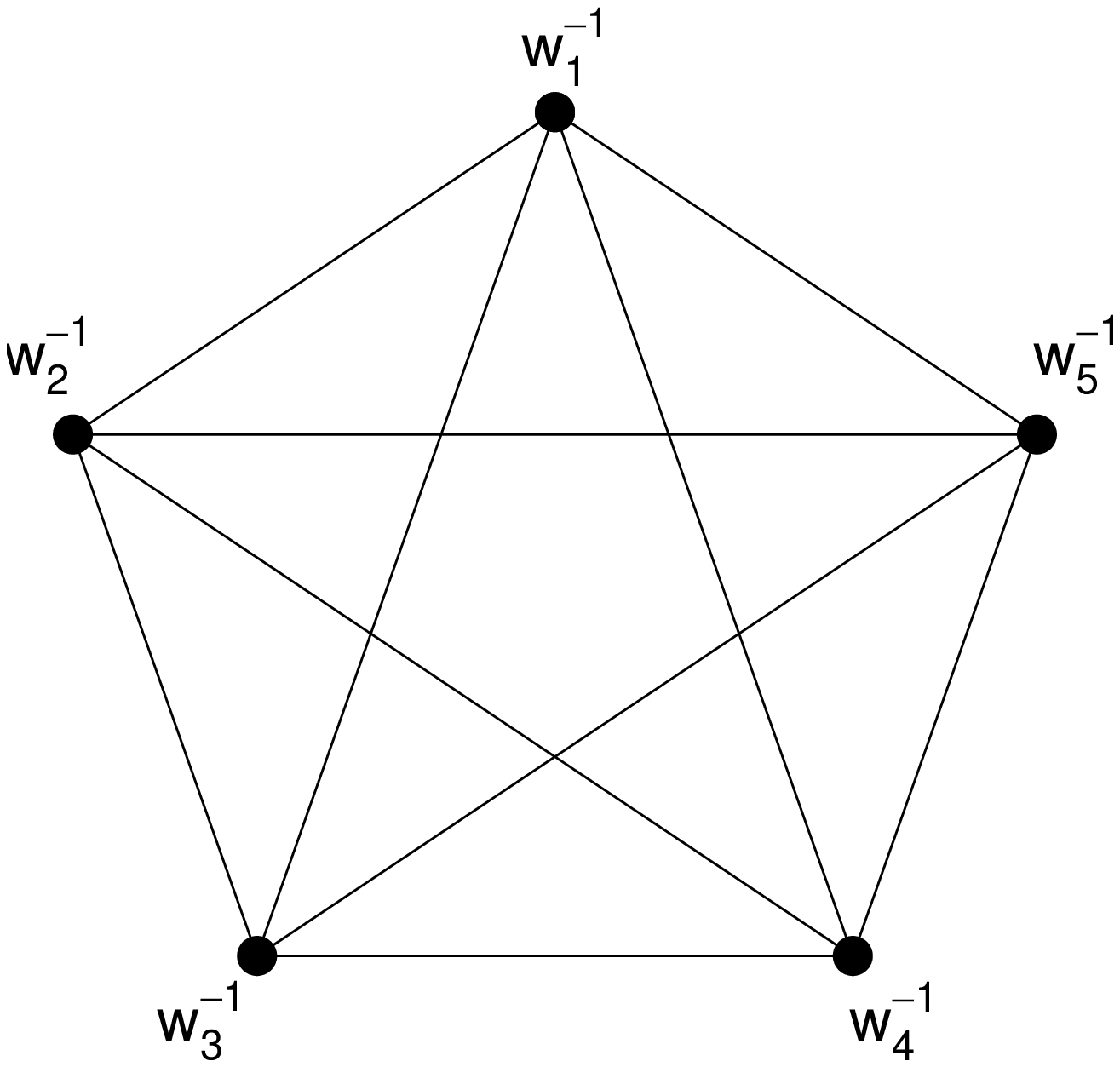, height=7cm}
		\vspace{-40pt}
		\caption{All pairwise comparisons}
		\label{fAPC}
	\end{subfigure}
	\caption{Graph representation of a treatment proportions design $w>0$ for selected systems of treatment contrasts. Since the Laplacian does not depend on the orientation of the edges, for clarity, the directions of the edges are suppressed. }\label{fGraphRep}
\end{figure}

From Theorem \ref{tEigLap} it follows that for a system of pairwise comparisons and an orthogonally invariant information function $\Phi$, the treatment design problem
$$\begin{aligned}
\max \quad & \Phi(N_Q(w)) \quad\text{or}\quad \Phi(C_Q(w))  \\
\text{s.t.} \quad & w>0,\quad \sum_{i=1}^v w_i = 1
\end{aligned}$$
can be expressed as
$$\begin{aligned}
\max \quad & \phi(1/\lambda_r(\L_w), \ldots, 1/\lambda_1(\L_w), 0, \ldots, 0) \\
\text{s.t.} \quad & w>0,\quad \sum_{i=1}^v w_i = 1,
\end{aligned}$$
where $\L_w$ is the Laplacian of the corresponding vertex-weighted graph.

Since $\alpha_i = w_i^{-1}$, the condition $\sum_i w_i = 1$ can be expressed as $\sum_i \alpha_i^{-1} = 1$ or $v/\sum_i \alpha_i^{-1} = v$, i.e., the harmonic mean of the vertex weights is equal to the number of vertices. Then, from the graph-theoretic point of view, the optimal design problem for a system of pairwise comparisons and an orthogonally invariant information function can be expressed as
$$\begin{aligned}
\max \quad & F(\L_\alpha)  \\
\text{s.t.} \quad & \alpha>0,\quad v/\sum_{i=1}^v \alpha_i^{-1} = v,
\end{aligned}$$
where $\L_\alpha$ is the Laplacian of a graph with vertex weights $\alpha$ and $F(\L_\alpha) := \phi(1/\lambda_r(\L_\alpha), \ldots,$ $1/\lambda_1(\L_\alpha), 0, \ldots, 0)$.
That is, the optimal design problem can be transformed to a problem of maximizing a given function $F$ defined on the eigenvalues of the Laplacian of a vertex-weighted graph over all vertex weights with a fixed harmonic mean.


Consider the Kiefer's $\Phi_p$-optimality criteria.
Using Theorem \ref{tEigLap}, $\Phi_p(w)$ may be expressed by employing the weighted Laplacian for both the full-rank and the the rank deficient case.

\begin{proposition}\label{pGraphKiefer}
Let $Q^T\tau$ be a system of pairwise comparisons with $\mathrm{rank}(Q)=r$ and let $w>0$ be a feasible design. Then, for any $p \in [-\infty,0]$, the value of the Kiefer's $\Phi_p$-optimality criterion can be expressed as
\begin{equation}\label{ePhipLaplacian}
\Phi_p(w)=
\begin{cases}
\; \Big(\frac{1}{r} \sum\limits_{j=1}^{r} \lambda_j^{-p}(\mathcal{L}_w) \Big)^{1/p}, & p \in (-\infty, 0), \\
\; \Big(\prod\limits_{j=1}^{r} \lambda_j(\mathcal{L}_w) \Big)^{-1/r}, & p=0, \\
\; 1/\lambda_{\max}(\mathcal{L}_w), & p=-\infty.
\end{cases}
\end{equation}
\end{proposition}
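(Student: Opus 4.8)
The plan is to obtain the proposition as a direct specialization of Theorem~\ref{tEigLap}, exploiting the fact that each Kiefer criterion $\Phi_p$, $p \in [-\infty,0]$, is an orthogonally invariant information function. By Theorem~\ref{tEigLap}, the positive eigenvalues of the (pseudo)information matrix---namely $\lambda_1(N_Q(w)),\ldots,\lambda_r(N_Q(w))$ in the full-rank case and $\lambda_1(C_Q(w)),\ldots,\lambda_r(C_Q(w))$ in the rank-deficient case---are precisely the reciprocals $1/\lambda_1(\L_w),\ldots,1/\lambda_r(\L_w)$ of the $r$ positive eigenvalues of the vertex-weighted Laplacian, with the largest information eigenvalue matching the smallest positive Laplacian eigenvalue. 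It therefore suffices to take the definition of $\Phi_p$ from Section~1 (restricted, in the rank-deficient case, to the positive eigenvalues as prescribed in Section~8.18 of \cite{puk}) and substitute the eigenvalue $1/\lambda_j(\L_w)$ for the $j$-th positive eigenvalue of the information matrix.

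First I would handle $p \in (-\infty,0)$. Inserting the reciprocal eigenvalues into $\big(\tfrac{1}{r}\sum_{j=1}^r \lambda_j^{p}\big)^{1/p}$ and using $\big(1/\lambda_j(\L_w)\big)^{p} = \lambda_j^{-p}(\L_w)$ yields the first branch of \eqref{ePhipLaplacian}; the ordering of the eigenvalues is irrelevant here since the sum is symmetric. Next, for $p=0$ I would substitute into the geometric-mean form $\big(\prod_{j=1}^r \lambda_j\big)^{1/r}$ and use that the product of the reciprocals equals the reciprocal of the product, giving $\big(\prod_{j=1}^r \lambda_j(\L_w)\big)^{-1/r}$, the second branch. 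Finally, for $p=-\infty$ the criterion is the smallest positive eigenvalue of the information matrix, which equals $\min_{1\le j\le r} 1/\lambda_j(\L_w) = 1/\max_{1\le j\le r}\lambda_j(\L_w) = 1/\lambda_{\max}(\L_w)$, the third branch.

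I expect no genuine difficulty here, as the statement is essentially a corollary of Theorem~\ref{tEigLap}; the only point requiring care is the bookkeeping of the eigenvalue indexing, in particular confirming that in the rank-deficient case the $s-r$ zero eigenvalues appended inside $\phi$ do not enter the sums, the products, or the minimum (the latter being understood over the positive eigenvalues), so that all three formulas depend only on the $r$ positive Laplacian eigenvalues. Once this is verified, the three cases above reproduce \eqref{ePhipLaplacian} exactly, uniformly across the full-rank and rank-deficient settings.
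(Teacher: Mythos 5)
Your proof is correct and takes essentially the same route as the paper, which states the proposition without a separate proof, presenting it as an immediate consequence of Theorem~\ref{tEigLap} (``Using Theorem~\ref{tEigLap}, $\Phi_p(w)$ may be expressed\ldots''). Your substitution of the reciprocal positive Laplacian eigenvalues into the three branches of $\Phi_p$, together with the check that the $s-r$ zero eigenvalues in the rank-deficient case do not enter the sum, product, or minimum, is exactly the intended argument.
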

In particular, $\Psi_0(w)=\prod_{j \leq r}\lambda_j(\mathcal{L}_w)$, $\Psi_{-1}(w)=\mathrm{tr}(\L_w)$ and $\Psi_{-\infty}(w) = \lambda_{\max} (\L_w)$. 
\bigskip

Note that the graph characterization of designs in model \eqref{eModel1} significantly differs from the graph characterization of block designs, which are represented by edges.
In particular, it is well known that the exact block designs with blocks of size two can be represented by (undirected) graphs $G=(V,E)$, where the vertices represent the particular treatments and there is an edge between two vertices $i,j$ if and only if there is a block with treatments $i$ and $j$, e.g., see \cite{CameronVanLint}, \cite{BaileyCameron}. 

To demonstrate the differences between the graph representation proposed in this paper and the graph representation of block designs, we will consider \emph{approximate} block designs.
Approximate block designs $\xi$ with blocks of size two may be represented by graphs with \emph{edge weights} $c$, where the weight of edge $(i,j)$ is $c_{ij} = \sum_k \xi(i,k)\xi(j,k)$, where $\xi(i,k)$ is the value of $\xi$ for treatment $i$ and block $k$. Then, the edge-weighted Laplacian is $\tilde{L}_c = RCR^T $, where $C$ is a diagonal matrix with its rows and columns indexed by $E$, with diagonal elements $c_{ij}$. It turns out that for $c_{ij} = \sum_k \xi(i,k)\xi(j,k)$, the Laplacian satisfies $\tilde{L}_\xi=d^{-1}M_\tau(\xi)$, where $d$ is the number of blocks and $M_\tau(\xi)$ is the Schur complement of $M_{22}(\xi)$ in the moment matrix $M(\xi)$, which represents the amount of information on the vector of the treatment effects. Thus, if $\Phi$ is an information function, a $\Phi$-optimal design maximizes the value of $\Phi(\xi) = \Phi(M_\tau(\xi)) = \Phi(d^{-1}\tilde{L}_\xi) = d^{-1}\Phi(\tilde{L}_\xi)$, which is equivalent to maximizing $\Phi(\tilde{L}_\xi)$.

In particular, $\Phi_p(\xi)$ can be expressed as
$$
\Phi_p(\xi)=
\begin{cases}
\; d^{-1}\Big(\frac{1}{v-1} \sum\limits_{j=1}^{v-1} \lambda_j^p(\tilde{L}_\xi) \Big)^{1/p}, & p \in (-\infty, 0), \\
\; d^{-1}\Big(\prod\limits_{j=1}^{v-1} \lambda_j(\tilde{L}_\xi) \Big)^{1/(v-1)}, & p=0, \\
\; d^{-1}\lambda_{v-1}(\tilde{L}_\xi), & p=-\infty,
\end{cases}
$$
which are, in a sense, opposite problems to the optimality of treatment proportions $w$, compare with \eqref{ePhipLaplacian}. Particularly, $\Psi_0(w) = \prod_{j \leq v-1} \lambda_j^{-1}(\tilde{L}_\xi)$, $\Psi_{-1}(w)=\mathrm{tr}(\tilde{L}_\xi^{-1})$ and $\Psi_{-\infty}(w) = \lambda_{v-1}^{-1}(\tilde{L}_\xi)$.  The contrast between the criteria for block designs and for treatment designs intuitively follows from the fact that the graphs for treatment designs $w$ are weighted by the \emph{inverse} values of $w$.

To summarize, the approximate block designs are represented by the edge weights, and the optimality criterion is calculated using the eigenvalues of the edge-weighted Laplacian; whereas the treatment designs are represented by the inverses of the vertex weights, the edges are fixed: they are specified by the system of contrasts, and the optimality criterion is calculated using the inverses of the eigenvalues of the vertex-weighted Laplacian.

\subsection{Experiments on graphs}

The relationship between treatment designs and vertex weighted graphs (and especially the representation by inverse values of the vertex weights) can be derived in a slightly different manner, because it naturally arises from considering experiments on graphs.
Let $G=(V,E)$ be a graph with $|V|=v$ and consider the following experiment. In each vertex $i$, we carry out $n_i$ trials; each vertex $i$ implying a particular mean response $\tau_i$, and the responses are independent, i.e.,
$$
Y_{it} = \tau_i + \varepsilon_{it}, \quad t=1,\ldots,n_i, \quad i=1,\ldots,v.
$$
where  $\tau=(\tau_1, \ldots, \tau_v)^T$ are the mean responses in vertices $1, \ldots, v$, $E(\varepsilon_{it})=0$, $\mathrm{Var}(\varepsilon_{it})=\sigma^2$ and $\varepsilon_1,\ldots, \varepsilon_v$ are i.i.d. Suppose that an edge $(i,j) \in E$ represents that we are interested in comparing the mean responses in vertices $i$ and $j$, $\tau_i-\tau_j$. The mean response $\tau_i$ can be estimated by the mean of the responses in vertex $i$, and the comparison of mean responses given by an edge can be performed by comparing means of the responses.  Let $\bar{Y_i} = \sum_t Y_{it}/n_i$ be the least squares estimator (LSE) of $\tau_i$, with $\mathrm{Var}(\bar{Y_i})=\sigma^2/n_i$. Then, for $(i,j) \in E$ let $Z_{ij} =\bar{Y_i} - \bar{Y_j}$ be the LSE for $\tau_i - \tau_j$, with
$$\mathrm{Cov}(Z_{ij},Z_{k\ell})=\begin{cases}
-\sigma^2n_j^{-1} &\text{if } k=j, \\
\sigma^2n_j^{-1} &\text{if } \ell=j, \\
\sigma^2(n_i^{-1} + n_j^{-1}) &\text{if } i=k, j=\ell, \\
0 &\text{otherwise}.
\end{cases}$$
It follows that $\mathrm{Var}(Z)=\sigma^2 R^T \diag(n_1^{-1}, \ldots, n_v^{-1}) R$, where $Z$ is the vector of random variables $Z_{ij}$ indexed by $E$, and $R$ is the incidence matrix of $G$. Therefore, for any directed graph $G$ the matrix $R^T \diag(n_1^{-1}, \ldots, n_v^{-1}) R$ can be thought of as the variance matrix of $G$.
Clearly, $R^T \diag(n_1^{-1}, \ldots, n_v^{-1}) R$ has the same positive eigenvalues as the vertex weighted Laplacian $\diag(n^{-1/2})RR^T\diag(n^{-1/2})$ of $G$ with vertex weights $n_i^{-1}$. It is thus natural that for a design $w>0$, the corresponding graph is weighted by the inverses of the design values.

\section{Optimality}\label{sOptimal}

\subsection{Permutation of Treatments}\label{ssPerm}
To prove certain optimality results, we will consider treatment permutations; thus, we briefly examine such transformations.
Let $w$ be a design, let $\pi$ be a permutation of $\{1,\ldots,v\}$ and let $P_\pi$ be the permutation matrix corresponding to $\pi$. Then, we denote the design obtained from $w$ by $\pi$-permutation of the treatment labels as $P_\pi w$. Then, $M(P_\pi w) = P_\pi M(w)P_\pi^T$, $V_Q(P_\pi w) = Q^T P_\pi M^{-1}(w)P_\pi^T Q$, and $N_Q(P_\pi w)$ and $C_Q(P_\pi w)$ change analogously.

As noted in Section \ref{ssGT}, if $\pi$ is an automorphism of the graph $G$, the incidence matrix does not change under $\pi$ and for a general vertex weight function $\alpha$, the vertex-weighted Laplacian $\L_\alpha$ changes to $P_\pi \L_\alpha P_\pi^T$. In particular, for the weights implied by a design $w$, the vertex-weighted Laplacian $\L_w$ changes to $\L_{P w} = P_\pi \L_w P_\pi^T$. It follows that if $\pi$ is an automorphism of the corresponding graph $G$, the Laplacians for $w$ and for $P_\pi w$ are orthogonally similar.

\subsection{$D$-optimality}
In \cite{ChungLanglands}, the authors provide an interpretation of the coefficients of the characteristic polynomial of the Laplacian of a vertex-weighted graph
$$\det(\lambda I - \L_\alpha) = \sum_{k=0}^{v} (-1)^k c_k \lambda^{v-k}(\L_\alpha) ,$$
with $c_0=1$, and since $\L_\alpha$ is always singular, $c_n=0$.
In particular, they obtain that $c_{v-1}=\prod_{j \leq v-1} \lambda_j(\L_\alpha) = \kappa(G)$, 
where $\kappa(G)$ is the sum of weights of all rooted directed spanning trees, which is a generalization of the well-known Matrix-tree theorem (e.g., \cite{Mohar}) for the vertex-weighted graphs. Thus, $\Psi_0(w) = \kappa(G)$ if $r=v-1$.

For the formal definition of $\kappa(G)$, let $k \in \{1,\ldots,v-1\}$ and let $S \subseteq V$, $|S|=k$, and let $X$ be a subset of $v-k$ edges. If $(V(G),X)$ is a spanning forest (i.e., a subgraph of $G$ containing no cycle, disregarding the orientations of the edges) and each of the subtrees contains exactly one vertex in $S$, the rooted spanning forest $X_S$ is obtained by orienting all edges of $X$ towards $S$. Furthermore, define the weight of $X_S$ as $\omega(X_S) := \prod_{(i,j) \in E(X_S)} \alpha_j$; $\kappa_S:= \sum_{X_S} \omega(X_S)$ is the total weight of all spanning forests induced by $S$, and $\kappa_k(G) := \sum_{S:|S|=k} \kappa_S(G)$ is the sum of weights of all rooted directed spanning forests with $k$ roots. Then, $\kappa(G)=\kappa_1(G)$. \cite{ChungLanglands} obtain that
$c_k = \kappa_{v-k}(G)$ for all $k=1,\ldots,v-1$.

Consider the vertex weights given by a feasible design $w$, $\alpha=w^{-1}$.
It is well-known that the coefficients of the characteristic polynomial satisfy 
$$
c_k = \sum_{i_1 < \ldots < i_k} \lambda_{i_1}(\L_w) \cdot \ldots \cdot \lambda_{i_k}(\L_w)
$$
and therefore, if $\mathrm{rank}(Q)=v-1$, only the smallest eigenvale of $\L_w$ is equal to zero and $\kappa(G) = c_{v-1} = \prod_{i\leq v-1} \lambda_i(\L_w) = \Psi_0(w)$, which we stated earlier. If $\mathrm{rank}(Q)=r\leq v-1$, the $v-r$ smallest eigenvalues of $\L_w$ are zero and we obtain that $\kappa_{v-r}(G) = c_{r} = \prod_{i\leq r} \lambda_i(\L_w) = \Psi_0(w)$.

\begin{proposition}\label{pDopt}
	Let $Q^T\tau$ be a system of pairwise comparisons with $\mathrm{rank}(Q)=r$, let $w>0$ be a feasible design and let $G$ be the corresponding vertex-weighted graph. Then, $\Psi_0(w) = \kappa_{v-r}(G)$ and $\Phi_0(w)=1/\kappa_{v-r}(G)$.
\end{proposition}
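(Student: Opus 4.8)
The plan is to chain three facts that have all been assembled in the paragraphs immediately preceding the statement, so that the argument reduces to careful bookkeeping rather than new analysis. First I would pin down the rank of $\L_w$. Recall from Section~\ref{ssGT} that $\L_w = M^{-1/2}(w)QQ^TM^{-1/2}(w)$, and that $M^{-1/2}(w)$ is nonsingular because $w>0$; hence $\mathrm{rank}(\L_w) = \mathrm{rank}(QQ^T) = \mathrm{rank}(Q) = r$. Since $Q^T1_v = 0_s$ for a system of pairwise comparisons, $M^{1/2}(w)1_v \in \mathcal{N}(\L_w)$, so $\L_w$ is singular and $r \le v-1$. This is precisely the admissible range of indices for which \cite{ChungLanglands} establish $c_k = \kappa_{v-k}(G)$. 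Consequently exactly the $v-r$ smallest eigenvalues of $\L_w$ vanish, while $\lambda_1(\L_w), \ldots, \lambda_r(\L_w)$ are positive.

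Next I would evaluate the coefficient $c_r$. Using the elementary-symmetric-polynomial expression $c_r = \sum_{i_1 < \cdots < i_r} \lambda_{i_1}(\L_w)\cdots\lambda_{i_r}(\L_w)$ recalled above, every summand whose index set differs from $\{1,\ldots,r\}$ must contain at least one of the vanishing eigenvalues and therefore equals zero; only the single term $\prod_{i \le r}\lambda_i(\L_w)$ survives, so $c_r = \prod_{i \le r}\lambda_i(\L_w)$. Invoking Proposition~\ref{pGraphKiefer}, which gives $\Psi_0(w) = \prod_{j \le r}\lambda_j(\L_w)$, together with the generalized matrix-tree identity $c_r = \kappa_{v-r}(G)$ of \cite{ChungLanglands}, I obtain $\Psi_0(w) = c_r = \kappa_{v-r}(G)$, which is the first assertion. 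The statement for $\Phi_0$ then follows directly from the explicit expression for $\Phi_0(w)$ in terms of $\prod_{j \le r}\lambda_j(\L_w)$ supplied by the same Proposition~\ref{pGraphKiefer}, upon substituting the value $\kappa_{v-r}(G)$ just computed.

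As for the main obstacle, there is essentially no deep remaining step: the genuinely hard content, namely the interpretation of the characteristic-polynomial coefficients as weighted counts of rooted directed spanning forests, is exactly the theorem of \cite{ChungLanglands} that I am entitled to assume. The only points demanding care are confirming that the relevant index $k = r$ lies in the admissible range $1 \le k \le v-1$, so that the Chung--Langlands indexing $c_k = \kappa_{v-k}(G)$ legitimately applies, and correctly tracking which elementary symmetric term survives once the multiplicity $v-r$ of the zero eigenvalue of $\L_w$ is accounted for. Both are routine, so the proof is in effect a short assembly of already-established identities.
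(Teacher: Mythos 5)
Your proposal reproduces the paper's own argument step for step: the paper likewise combines $\mathrm{rank}(\mathcal{L}_w)=r$ (so the $v-r$ smallest eigenvalues vanish), the elementary-symmetric-polynomial identity $c_r=\prod_{i\leq r}\lambda_i(\mathcal{L}_w)$, the Chung--Langlands identity $c_k=\kappa_{v-k}(G)$, and Proposition~\ref{pGraphKiefer}, all assembled in the paragraphs immediately preceding the statement, which is why no separate proof is printed. One caution: substituting into Proposition~\ref{pGraphKiefer} literally yields $\Phi_0(w)=\big(\kappa_{v-r}(G)\big)^{-1/r}$ rather than $1/\kappa_{v-r}(G)$, a normalizing power (harmless, since the two are criterion-equivalent) that the paper's own statement also elides, so your ``follows directly by substitution'' inherits exactly the same imprecision as the original.
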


Proposition \ref{pDopt} states that the value of $\Psi_0(w)$ is equal to the total weight of all rooted directed spanning forests with $r$ roots. In the most usual case, with $\mathrm{rank}(Q)=v-1$, it is the total weight of all directed rooted spanning trees. It follows that a $D$-optimal design minimizes the total weight of rooted directed spanning forests with $v-r$ roots. Compare with $D$-optimal block designs, which \emph{maximize} the number of spanning trees of a graph, see, e.g., \cite{Cheng81}, \cite{Bailey09}.

The $D$-optimality criterion is of limited interest to us if $s=v-1$ and $Q$ is a full-rank system, because it is well-known that in such case, the uniform design $\bar{w}=1_v/v$ is always $D$-optimal, which can be proved by a reparametrization of the system of interest. Such proposition can be generalized for any system of $s\geq v-1$ contrasts of rank $v-1$. Of course, for the number of contrasts $s>v-1$ we consider the rank-deficient version of $D$-optimality, $\Psi_0(w) = \prod_j \lambda_j(V_Q(w))$, where the product is over all positive eigenvalues of $V_Q(w)$. Thus, $\Psi_0(w)$ is the pseudo-determinant of $V_Q(w)$ (see \cite{Knill}), which is the product of all non-zero eigenvalues of $V_Q(w)$. The reparametrization argument cannot be easily replicated in such a case, because the pseudo-determinant does not satisfy $\Det(AB)=\Det(A)\Det(B)$ in general. First, we provide a technical lemma that states that if a symmetric matrix $A$ is in the class of matrices satisfying $A1_v = 0_v$, all its first minors are the same, up to a change of sign.

\begin{lemma}\label{lMinors}
	Let $A$ be a $v \times v$ symmetric matrix satisfying $A1_v = 0_v$. Then, $\det(A_{ij})=(-1)^{i+j}\det(A_{11})$ for all $i,j \in \{1,\ldots,v\}$.
\end{lemma}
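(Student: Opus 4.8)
The plan is to recognize the statement as the assertion that all cofactors $C_{ij} := (-1)^{i+j}\det(A_{ij})$ of $A$ coincide, a property familiar from the Matrix-tree theorem, where the number of spanning trees can be read off any cofactor of the Laplacian. The cleanest route is through the adjugate matrix $\mathrm{adj}(A)$, whose $(j,i)$ entry is exactly $C_{ij}$; proving that $\mathrm{adj}(A)$ is a constant matrix is then equivalent to the claim, since in that case $C_{ij} = C_{11} = \det(A_{11})$ for all $i,j$, and unwinding the sign yields $\det(A_{ij}) = (-1)^{i+j}\det(A_{11})$.

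First I would record the structural consequences of the hypotheses. Since $A1_v = 0_v$, the matrix $A$ is singular, so $\det(A) = 0$ and the fundamental identity $A\,\mathrm{adj}(A) = \det(A) I_v$ degenerates to $A\,\mathrm{adj}(A) = 0_{v\times v}$, and likewise $\mathrm{adj}(A)\,A = 0_{v\times v}$. Reading the first identity columnwise shows that every column of $\mathrm{adj}(A)$ lies in $\mathcal{N}(A)$; reading the second, and using that $A$ is symmetric so that $\mathcal{N}(A^T)=\mathcal{N}(A)$, shows that every row of $\mathrm{adj}(A)$ lies in $\mathcal{N}(A)$ as well. The hypothesis $A1_v = 0_v$ moreover singles out $1_v$ as a distinguished null vector.

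The argument then splits on $\mathrm{rank}(A)$. If $\mathrm{rank}(A) = v-1$, then $\mathcal{N}(A) = \mathrm{span}\{1_v\}$, so each column of $\mathrm{adj}(A)$ is a scalar multiple of $1_v$ (hence constant down the column) and each row is a scalar multiple of $1_v^T$ (hence constant across the row); a matrix that is constant along every row and along every column is constant overall, which is precisely what we need. If $\mathrm{rank}(A) \le v-2$, then every $(v-1)\times(v-1)$ submatrix of $A$ is singular, so $\det(A_{ij}) = 0$ for all $i,j$ and $\mathrm{adj}(A) = 0_{v\times v}$, and the asserted equality holds trivially with both sides equal to zero.

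I expect the main subtlety to be organizational rather than computational: one must isolate the rank-deficient case, where $\mathcal{N}(A)$ is larger than $\mathrm{span}\{1_v\}$ and the "constant rows and columns" deduction no longer pins down the entries, and treat it by the vanishing of all first minors instead. One must also keep the bookkeeping straight among the minor $\det(A_{ij})$, the cofactor $C_{ij} = (-1)^{i+j}\det(A_{ij})$, and the transposed indexing $\mathrm{adj}(A)_{ji} = C_{ij}$, so that the sign $(-1)^{i+j}$ emerges correctly in the final identity.
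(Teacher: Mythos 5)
Your proposal is correct, but it takes a different route from the paper: the paper's entire proof is a one-line citation of the known fact that for a matrix with zero row and column sums all cofactors are equal (Lemma 4.2 in Bapat's \emph{Graphs and Matrices}), whereas you supply a self-contained proof of that fact via the adjugate. Your argument is sound at every step: $A1_v=0_v$ forces $\det(A)=0$, so $A\,\mathrm{adj}(A)=\mathrm{adj}(A)\,A=0_{v\times v}$, placing the columns of $\mathrm{adj}(A)$ in $\mathcal{N}(A)$ and, via symmetry ($\mathcal{N}(A^T)=\mathcal{N}(A)$), the rows as well; in the case $\mathrm{rank}(A)=v-1$ one has $\mathcal{N}(A)=\mathrm{span}\{1_v\}$, so $\mathrm{adj}(A)$ has constant columns and constant rows and is therefore a constant matrix, while in the case $\mathrm{rank}(A)\leq v-2$ every first minor vanishes and the identity holds trivially — a degenerate case the paper's citation absorbs silently but which you rightly isolate, since the ``constant rows and columns'' deduction genuinely fails when $\mathcal{N}(A)$ is larger than $\mathrm{span}\{1_v\}$. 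The sign bookkeeping through $\mathrm{adj}(A)_{ji}=C_{ij}=(-1)^{i+j}\det(A_{ij})$ and $C_{11}=\det(A_{11})$ is handled correctly. What the paper's approach buys is brevity and a pointer to the standard graph-theoretic context (this is the lemma underlying the Matrix-tree theorem, which the paper invokes elsewhere); what yours buys is a complete, reference-free argument that also makes explicit exactly where the symmetry hypothesis enters — namely, in converting the single hypothesis $A1_v=0_v$ into the zero-column-sum condition $1_v^TA=0_v^T$ that the cited lemma would otherwise require as a separate assumption.
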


\begin{proof}
	The Lemma follows from the well known fact that for any matrix $A$ satisfying the conditions of this Lemma, the cofactors of any two elements of $A$ are equal (e.g., see Lemma 4.2 in \cite{Bapat}).
\end{proof}

Now, we may formulate the optimality of the uniform design.

\begin{theorem}
	Let $Q^T\tau$ be a system of $s\geq v-1$ contrasts of rank $v-1$. Then, the uniform design $\bar{w}=1_v/v$ is $\Psi_0$-optimal for estimating $Q^T\tau$.
\end{theorem}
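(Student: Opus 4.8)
The plan is to reduce the rank-deficient $\Psi_0$-problem to an ordinary full-rank $D$-optimality problem, deliberately avoiding the failure of $\Det(AB)=\Det(A)\Det(B)$ by passing to nonzero eigenvalues only. Since $Q^T1_v=0_s$ and $\mathrm{rank}(Q)=v-1$, the column space of $Q$ equals the whole contrast space $1_v^\perp$. I would fix a $v\times(v-1)$ matrix $K$ whose columns form an orthonormal basis of $1_v^\perp$, so that $K^TK=I_{v-1}$ and $KK^T=I_v-J_v/v$, and then write $Q=KT$ with $T:=K^TQ$, a $(v-1)\times s$ matrix. Because $K^T$ is injective on $\mathcal{C}(K)=\mathcal{C}(Q)$, the matrix $T$ has full row rank $v-1$; hence $TT^T$ is positive definite and $\det(TT^T)>0$ is a constant independent of $w$.

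Next I would establish the factorization $\Psi_0(w)=\det(TT^T)\,\det(K^TM^{-1}(w)K)$. Writing $V_Q(w)=T^T\big(K^TM^{-1}(w)K\big)T$ and setting $G:=K^TM^{-1}(w)K$, which is $(v-1)\times(v-1)$ and positive definite, the nonzero eigenvalues of $V_Q(w)=(G^{1/2}T)^T(G^{1/2}T)$ coincide with those of $(G^{1/2}T)(G^{1/2}T)^T=G^{1/2}TT^TG^{1/2}$, exactly the $X^TX$ versus $XX^T$ device used in the proof of Theorem~\ref{tEigLap}. The latter matrix is $(v-1)\times(v-1)$ and nonsingular, so its pseudo-determinant is an ordinary determinant, giving $\Psi_0(w)=\Det(V_Q(w))=\det(G)\det(TT^T)$. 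Consequently, minimizing $\Psi_0$ over feasible $w$ is equivalent to minimizing $\det(K^TM^{-1}(w)K)$, i.e. to the full-rank $D$-problem for the contrast system $K^T\tau$.

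To evaluate this determinant in closed form, I would apply the Cauchy--Binet formula to $K^T\diag(w^{-1})K$, obtaining $\det(K^TM^{-1}(w)K)=\sum_{j=1}^v \det(K_{\hat\jmath})^2\prod_{i\neq j}w_i^{-1}$, where $K_{\hat\jmath}$ is $K$ with its $j$-th row deleted. This is where Lemma~\ref{lMinors} does the essential work: since $\det(K_{\hat\jmath})^2=\det\big((KK^T)_{jj}\big)$ and $KK^T=I_v-J_v/v$ is symmetric with $(KK^T)1_v=0_v$, all these diagonal minors are equal, and because they sum to $\Det(KK^T)=1$ (the product of the $v-1$ unit eigenvalues of $I_v-J_v/v$), each equals $1/v$. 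Using $\sum_j w_j=1$ then collapses the sum to $\det(K^TM^{-1}(w)K)=\big(v\prod_i w_i\big)^{-1}$.

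The remaining step is immediate: minimizing $\big(v\prod_i w_i\big)^{-1}$ subject to $w>0$, $\sum_i w_i=1$ amounts to maximizing $\prod_i w_i$, which by the AM--GM inequality is attained uniquely at $\bar w=1_v/v$; hence $\bar w$ minimizes $\Psi_0$, as claimed. The main obstacle is precisely the one flagged in the text, namely that the reparametrization argument for full-rank systems cannot be copied verbatim because the pseudo-determinant is not multiplicative. The crux is therefore the factorization step: restricting to the nonzero spectrum of $G^{1/2}TT^TG^{1/2}$ restores a genuine determinant and lets $\det(TT^T)$ separate out as an inert constant, after which Lemma~\ref{lMinors} handles all the first minors of $KK^T$ uniformly and the optimization trivializes.
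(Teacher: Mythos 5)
Your proof is correct, but it takes a genuinely different route from the paper. The paper never computes the optimal value: it proves the \emph{permutation-invariance} $\Psi_0(P_\pi w)=\Psi_0(w)$ for every permutation $\pi$, using Knill's pseudo-determinant identities $\Det(F^TG)=\Det(FG^T)$ and the minor-expansion $\Det(F^TG)=\sum_X \det(F_X)\det(G_X)$, with Lemma~\ref{lMinors} applied to $QQ^T$ to make the diagonal minors $\det(QQ^T)_{ii}$ collapse to a common constant; optimality of $\bar w$ then follows by averaging $w$ over all $v!$ permutations and invoking concavity of $\Phi_0$. You instead \emph{reduce to a full-rank problem}: factoring $Q=KT$ through an orthonormal basis $K$ of $1_v^\perp$ and using the $X^TX$-versus-$XX^T$ spectrum identity (the same device as in Theorem~\ref{tEigLap}) turns the pseudo-determinant into an honest determinant, $\Psi_0(w)=\det(TT^T)\det\bigl(K^TM^{-1}(w)K\bigr)$, which is exactly how you sidestep the non-multiplicativity of $\Det$ flagged in the text; Cauchy--Binet and Lemma~\ref{lMinors} (applied to $KK^T=I_v-J_v/v$, whose diagonal minors one can also just compute directly as $\det(I_{v-1}-J_{v-1}/v)=1/v$) then give the closed form $\Psi_0(w)=\det(TT^T)\,(v\prod_i w_i)^{-1}$, and AM--GM finishes. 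All steps check out: $T$ has full row rank since $K$ has full column rank and $\mathrm{rank}(Q)=v-1$, and the sum-of-principal-minors identity for $KK^T$ is valid because its rank is exactly $v-1$. Your approach buys more than the paper's: an explicit formula for the criterion value and, via strictness in AM--GM, \emph{uniqueness} of the optimum, neither of which the symmetrization argument delivers without an additional strict-concavity discussion. The paper's argument, in turn, is shorter once Knill's identities are granted, requires no choice of basis, and fits the symmetrization template reused later for cyclic systems (Theorems~\ref{tCyclic} and~\ref{tOrthOptGen}).
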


\begin{proof}
	Let $w>0$, let $\pi$ be a permutation of treatments and let $P:=P_\pi$. The moment matrix of $Pw$ is $M(Pw)=PM(w)P^T$ and therefore, $V_Q(Pw)=Q^TPM^{-1}(w)P^TQ$. We will use the facts provided in \cite{Knill} that for any two $a \times b$ matrices $F$, $G$ the following hold: $\Det(F^TG)=\Det(FG^T)$; and $\Det(F^TG) = \sum_X \det(F_X)\det(G_X)$, where the sum is over all $k \times k$ sub matrix masks $X$ of $F$ and $F_X$, $G_X$ are the corresponding submatrices, where $(-1)^k c_k\lambda^{b-k}$ is the smallest order entry in the characteristic polynomial of the $b \times b$ matrix $F^TG$. Then,
	$$
	\Psi_0(Pw) = \Det(V_Q(Pw)) = \Det(QQ^T P M^{-1}(w)P^T ) = \sum_X \det(QQ^T)_X \det(P M^{-1}(w)P^T)_X,
	$$
	where the sum is over all $(v-1) \times (v-1)$ submatrices, because $Q$ has rank $v-1$. Such sum may be expressed as $\sum_{i,j} \det(QQ^T)_{ij} \det(P M^{-1}(w)P^T))_{ij}$, where the sum is over all $i,j \in \{1,\ldots,v\}$. Because $\det(P M^{-1}(w)P^T)_{ij} = 0$ if $i \neq j$, we obtain 
	$$\Psi_0(Pw) =\sum_{i} \det(QQ^T)_{ii} \det(P M^{-1}(w)P^T)_{ii}.$$
	
	Since $Q$ is a matrix of contrasts, we have $1_v^TQ = 0_s^T$ and thus $QQ^T$ satisfies the conditions of Lemma \ref{lMinors}. It follows that $\det(QQ^T)_{ii} = \det(QQ^T)_{11}$; moreover, $\det(P M^{-1}(w)P^T)_{ii}$ is the product of all the $w_j^{-1}$-s except $w_{\pi^{-1}(i)}^{-1}$. Therefore, 
	$$\begin{aligned}
	\Psi_0(Pw) 
	&= \det(QQ^T)_{11} \sum_{i} \det(P M^{-1}(w)P^T)_{ii} 
	= \det(QQ^T)_{11} \sum_i \prod_{j \neq \pi^{-1}(i)} w_j^{-1} \\
	&=  \det(QQ^T)_{11} \sum_i \prod_{j \neq i} w_j^{-1} 
	= \det(QQ^T)_{11} \sum_i \det(M^{-1}(w))_{ii}= \Psi_0(w).
	\end{aligned}$$
	Hence, $\Phi_0(Pw)=\Phi_0(w)$, which yields
	$$
	\Phi_0(\bar{w}) 
	= \Phi_0(\frac{1}{v!}\sum_\pi P_\pi w) 
	\geq \frac{1}{v!} \sum_\pi \Phi_0(P_\pi w) = \Phi_0(w),
	$$
	where the inequality follows from the concavity of $\Phi_0$.
\end{proof}

However, in general, the uniform treatment proportions need not be optimal as shown in the following example.
\begin{example}
	Let $Q=(-1,1_{v-1}^T/(v-1))^T$ (i.e., in fact, we are examining $c$-optimality, see Chapter 2 by \cite{puk}), which aims at estimating the average comparison with control $\overline{\tau_i-\tau_0}$. Then, $\Psi_0(w) = w_1^{-1} + (v-1)^{-2} \sum_{i>1} w_i^{-1}$ for any $w>0$, and the unique optimal treatment design $w^*$ satisfies $w_1^* = 1/2$ and $w_i^* =(2(v-1))^{-1}$ for $i>1$, which obviously is not the uniform design.
\end{example}

Let us return to Example \ref{exSpecialContrasts}.

\begin{example}[Example \ref{exSpecialContrasts} cont.]
	Since the system of contrasts $Q_1$ in Example \ref{exSpecialContrasts} is a full-rank system of $v-1$ contrasts, the $D$-optimal design for $Q_1^T\tau$ is $\bar{w}=1_7/7$ and the corresponding graph $G_1$ has weight $\alpha_i=7$ on each vertex. Since the graph $G_1$ is a tree (in fact, $G_1$ is a tree rooted in vertex 1), any rooted spanning tree of $G_1$ is obtained by simply appropriately changing the directions of the edges in $G_1$. It follows that the design $\bar{w}$ minimizes the total weight of all rooted versions of $G_1$, weighted by the inverse design values.
\end{example}

\subsection{$A$-optimality}

Note that $\mathrm{tr}(\L_\alpha)=\sum_i d_i \alpha_i$, which can be expressed as $\sum_i \tilde{d}_i$, where $\tilde{d}_i := \alpha_id_i$ is the weighted degree of the vertex $i$. Then, the $A$-optimality value $\Psi_{-1}(w)=\mathrm{tr}(\L_w)$ is equal to the total weighted degree of graph $G$, i.e., the sum of all weighted degrees of its vertices, $\Psi_{-1}(w) = \sum_i \tilde{d}_i = \sum_i w_i^{-1}d_i$, where $\tilde{d}_i = w_i^{-1}d_i$. It generalizes the well-known fact that $\mathrm{tr}(L)$ is equal to twice the number of edges of $G$, or equivalently, to the total degree of $G$. As a consequence, an $A$-optimal treatment design $w$ minimizes the total weighted degree of $G$, i.e., the sum of weighted degrees of vertices in $G$, with weights inverse to the design values, $\alpha_i=w_i^{-1}$.

\begin{proposition}\label{pAoptPairwise}
	Let $Q^T\tau$ be a system of pairwise comparisons, let $w>0$ be a feasible design and let $G$ be the corresponding vertex-weighted graph. Then, $\Psi_{-1}(w) = \sum_i w_i^{-1} d_i = \sum_i \tilde{d}_i$ and $\Phi_{-1}(w) = r \big(\sum_i w_i^{-1} d_i)^{-1}$.
\end{proposition}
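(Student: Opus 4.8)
The plan is to prove the two claimed identities in turn, each reducing to elementary trace manipulations together with the eigenvalue correspondence already recorded in Theorem \ref{tEigLap}.

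First I would establish the identity $\Psi_{-1}(w) = \sum_i w_i^{-1} d_i$. By definition $\Psi_{-1}(w) = \mathrm{tr}(V_Q(w)) = \mathrm{tr}(Q^T M^{-1}(w) Q)$, and cyclic invariance of the trace lets me rewrite this as $\mathrm{tr}(M^{-1}(w) Q Q^T)$. For a system of pairwise comparisons the incidence matrix coincides with $Q$, so $Q Q^T = R R^T = L$, the unweighted Laplacian of $G$. Since $M^{-1}(w) = \diag(w_1^{-1}, \ldots, w_v^{-1})$ is diagonal and the diagonal entries of $L$ are exactly the vertex degrees $d_i$, the trace of the product picks up only the diagonal contributions, giving $\mathrm{tr}(M^{-1}(w) L) = \sum_i w_i^{-1} d_i = \sum_i \tilde{d}_i$. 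Equivalently, one may simply invoke $\Psi_{-1}(w) = \mathrm{tr}(\mathcal{L}_w)$ from Proposition \ref{pGraphKiefer} together with the identity $\mathrm{tr}(\mathcal{L}_\alpha) = \sum_i d_i \alpha_i$ evaluated at $\alpha = w^{-1}$.

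Next I would derive the value of $\Phi_{-1}(w)$. The $A$-optimality criterion is the scaled harmonic mean of the positive eigenvalues of the information matrix: in the full-rank case $\Phi_{-1}(w) = \big(\frac{1}{r}\sum_{j=1}^r \lambda_j^{-1}(N_Q(w))\big)^{-1}$ with $r = s$, and in the rank deficient case the same expression with $N_Q(w)$ replaced by $C_Q(w)$ and the sum taken over the $r$ positive eigenvalues. In both cases the $r$ positive eigenvalues of the information matrix are precisely the reciprocals of the $r$ positive eigenvalues of $V_Q(w)$, since $N_Q(w) = V_Q^{-1}(w)$ and $C_Q(w) = V_Q^+(w)$. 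Hence $\sum_j \lambda_j^{-1}(\text{info matrix}) = \sum_j \lambda_j(V_Q(w)) = \mathrm{tr}(V_Q(w)) = \Psi_{-1}(w)$, where the middle equality uses that $V_Q(w)$ is non-negative definite with exactly $r$ nonzero eigenvalues so that its trace equals the sum of its positive eigenvalues. Substituting the first identity then yields $\Phi_{-1}(w) = r/\Psi_{-1}(w) = r\big(\sum_i w_i^{-1} d_i\big)^{-1}$.

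The argument is essentially routine given the earlier results, so I do not anticipate a genuine obstacle. The only point deserving a little care is the rank-deficient bookkeeping: one must confirm that the $r$ positive eigenvalues of $C_Q(w) = V_Q^+(w)$ correspond, with multiplicities, to the reciprocals of the $r$ positive eigenvalues of $V_Q(w)$, and that the $s-r$ vanishing eigenvalues of $V_Q(w)$ contribute nothing to $\mathrm{tr}(V_Q(w))$. This is exactly the eigenvalue correspondence underlying Theorem \ref{tEigLap}, which handles both the full-rank and the rank deficient cases uniformly.
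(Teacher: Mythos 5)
Your proof is correct and takes essentially the same route as the paper: the paper obtains $\Psi_{-1}(w)=\mathrm{tr}(\mathcal{L}_w)=\sum_i w_i^{-1}d_i$ directly from Proposition \ref{pGraphKiefer} and the diagonal of the vertex-weighted Laplacian (exactly the alternative you cite), and your primary computation $\mathrm{tr}(Q^TM^{-1}(w)Q)=\mathrm{tr}(M^{-1}(w)QQ^T)$ with $QQ^T=L$ is the same elementary trace identity the paper itself uses for the general-system version in Proposition \ref{pAopt}. Your rank-deficient bookkeeping, giving $\Phi_{-1}(w)=r/\Psi_{-1}(w)$ via the positive eigenvalues of $C_Q(w)=V_Q^+(w)$, likewise matches the paper's treatment.
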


The value of the $A$-optimality criterion can be expressed without the graph terminology. Recall that we denote the elements of the coefficient matrix $Q$ as $q_{ij}$.

\begin{corollary}\label{cAopt}
	Let $Q^T\tau$ be a system of pairwise comparisons and let $w>0$ be a feasible design. Then, $\Psi_{-1}(w) = \sum_i w_i^{-1} \sum_j | q_{ij} |$ and  $\Phi_{-1}(w) = r  \big(\sum_i w_i^{-1} \sum_j | q_{ij} |\big)^{-1}$.
\end{corollary}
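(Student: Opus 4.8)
The plan is to reduce the corollary directly to Proposition \ref{pAoptPairwise} by identifying the purely algebraic quantity $\sum_j |q_{ij}|$ with the combinatorial degree $d_i$. Recall from Section \ref{sGraph} that for a system of pairwise comparisons the coefficient matrix $Q$ coincides with the incidence matrix $R$ of the corresponding graph $G$, so that $q_{ij} = R_{i,j}$, where now the column index $j$ ranges over the edges. The key observation is therefore that each entry $q_{ij}$ equals $+1$, $-1$, or $0$, and that $|q_{ij}| = 1$ precisely when edge $j$ is incident with vertex $i$, and $|q_{ij}| = 0$ otherwise.

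First I would make this observation explicit using the definition of the incidence matrix given in Section \ref{ssGT}: an edge $j$ directed from $i$ contributes $R_{i,j} = 1$, an edge directed towards $i$ contributes $R_{i,j} = -1$, and a non-incident edge contributes $0$. In all three cases $|R_{i,j}|$ is the indicator that edge $j$ is incident with $i$. Summing over all edges $j$ thus counts exactly the edges incident with vertex $i$, which by definition is the degree $d_i$. Hence $\sum_j |q_{ij}| = d_i$ for every $i$.

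Having established this identity, I would substitute $d_i = \sum_j |q_{ij}|$ into the two expressions provided by Proposition \ref{pAoptPairwise}, namely $\Psi_{-1}(w) = \sum_i w_i^{-1} d_i$ and $\Phi_{-1}(w) = r\big(\sum_i w_i^{-1} d_i\big)^{-1}$, to obtain immediately $\Psi_{-1}(w) = \sum_i w_i^{-1} \sum_j |q_{ij}|$ and $\Phi_{-1}(w) = r\big(\sum_i w_i^{-1}\sum_j |q_{ij}|\big)^{-1}$, as claimed.

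I do not expect a genuine obstacle here, since the corollary is essentially a restatement of Proposition \ref{pAoptPairwise} with the graph-theoretic degree replaced by its equivalent matrix expression; the only point requiring care is to verify that the absolute row sum of the incidence matrix equals the degree, which is a direct consequence of the entries of $R$ lying in $\{-1,0,1\}$ with a nonzero entry exactly at the incident edges.
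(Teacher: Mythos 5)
Your proposal is correct and matches the paper's proof: both establish the identity $d_i = \sum_j |q_{ij}|$ (the paper phrases this as the degree equaling the number of occurrences of $\tau_i$ in $Q^T\tau$, you phrase it via the $\{-1,0,1\}$ entries of the incidence matrix $R = Q$) and then substitute into Proposition \ref{pAoptPairwise}. Your version merely spells out the incidence-matrix bookkeeping in more detail; the argument is the same.
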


\begin{proof}
	The degree $d_i$ of a vertex $i$ is equal to the number of edges incident with $i$, which is the number of occurences of $\tau_i$ in $Q^T\tau$. Thus, $d_i = \sum_j |q_{ij}|$.
\end{proof}

Corollary \ref{cAopt} can be generalized to any system of contrasts resulting in a fairly trivial proposition; note that if $Q^T\tau$ is a system of pairwise comparisons, $|q_{ij}| = q_{ij}^2$ for each $i,j$.

\begin{proposition}\label{pAopt}
	Let $Q^T\tau$ be a system of treatment contrasts and let $w>0$. Then, $\Psi_{-1}(w) = \sum_i w_i^{-1} \sum_j q_{i,j}^2$ and $\Phi_{-1}(w) = r \big(\sum_i w_i^{-1} \sum_j q_{i,j}^2\big)^{-1}$.
\end{proposition}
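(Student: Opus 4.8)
The plan is to compute $\Psi_{-1}(w) = \operatorname{tr}(V_Q(w))$ directly from the definition $V_Q(w) = Q^T M^{-1}(w) Q$, where $M(w) = \diag(w)$, without relying on the graph terminology. This is the natural route because Proposition \ref{pAopt} is stated as a generalization of Corollary \ref{cAopt} to arbitrary systems of contrasts, where the incidence-matrix structure (and hence the clean degree interpretation) is no longer available, but the trace computation remains elementary.

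First I would write out the diagonal entries of $M^{-1}(w) = \diag(w_1^{-1}, \ldots, w_v^{-1})$ and observe that $\operatorname{tr}(V_Q(w)) = \operatorname{tr}(Q^T M^{-1}(w) Q) = \operatorname{tr}(M^{-1}(w) Q Q^T)$ by the cyclic property of the trace. The $i$-th diagonal entry of $M^{-1}(w) Q Q^T$ is $w_i^{-1} (QQ^T)_{ii} = w_i^{-1} \sum_j q_{ij}^2$, since the $(i,i)$ entry of $QQ^T$ is the squared norm of the $i$-th row of $Q$. Summing over $i$ yields $\Psi_{-1}(w) = \sum_i w_i^{-1} \sum_j q_{ij}^2$ immediately. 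For the second claim, since $\Phi_{-1}(w)$ is defined so that $\Phi_{-1}(w) = r/\Psi_{-1}(w)$ (from the relationship $\Phi_{-1}(H) = \Phi_p(H)$ at $p=-1$ being the harmonic mean of the positive eigenvalues, the reciprocal of the normalized trace $\Psi_{-1}/r$), substituting the expression for $\Psi_{-1}(w)$ gives $\Phi_{-1}(w) = r\big(\sum_i w_i^{-1} \sum_j q_{ij}^2\big)^{-1}$.

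There is essentially no obstacle here; the computation is genuinely routine, and the proof is just the trace manipulation together with reading off the diagonal of $QQ^T$. The only point warranting a word of care is confirming that $\sum_j q_{ij}^2 = (QQ^T)_{ii}$, which is simply the definition of a diagonal Gram-matrix entry, and verifying that the relationship $\Phi_{-1} = r/\Psi_{-1}$ used implicitly in the earlier propositions carries over verbatim, which it does because the definition of $\Phi_{-1}$ in terms of the positive eigenvalues of $N_Q(w)$ or $C_Q(w)$ does not depend on whether the system is a pairwise-comparison system. For the pairwise case one recovers Corollary \ref{cAopt} since $|q_{ij}| = q_{ij}^2$ when every entry is in $\{-1,0,1\}$, which the statement already remarks.
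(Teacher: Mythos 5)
Your proof is correct and is essentially identical to the paper's: the paper's entire argument is the same one-line trace computation $\Psi_{-1}(w)=\mathrm{tr}(Q^TM^{-1}(w)Q)=\mathrm{tr}(M^{-1}(w)QQ^T)=\sum_i w_i^{-1}\sum_j q_{ij}^2$. Your additional check that $\Phi_{-1}(w)=r/\Psi_{-1}(w)$ carries over to rank-deficient systems is a fine (if routine) supplement that the paper leaves implicit in its definitions.
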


\begin{proof}
	Let us calculate $\Psi_{-1}(w) = \mathrm{tr}(Q^TM^{-1}(w)Q) = \mathrm{tr}(M^{-1}(w)QQ^T) = \sum_i w_i^{-1} \sum_j q_{ij}^2$.
\end{proof}

From Propositions \ref{pAoptPairwise} and \ref{pAopt}, the $A$-optimal treatment proportions can be easily calculated.

\begin{proposition}
	Let $Q^T\tau$ be a system of contrasts. Then, the $A$-optimal treatment proportions for $Q^T\tau$ are given by
	\begin{equation}\label{eAopt}
	w_i^* = \frac{\sqrt{\sum_k q_{i,k}^2}}{\sum_j \sqrt{\sum_k q_{j,k}^2}}, \quad i=1,\ldots,v.
	\end{equation}
	In particular, if $Q^T\tau$ is a system of pairwise comparisons, the $A$-optimal treatment proportions are 
	\begin{equation}\label{eAoptPairwise}
	w_i^* = \frac{\sqrt{d_i}}{\sum_j\sqrt{ d_j}}, \quad i=1,\ldots,v,
	\end{equation}
	where $d_i$ is the degree of the $i$-th vertex in the corresponding graph $G$.
\end{proposition}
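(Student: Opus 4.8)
The plan is to minimize the $A$-criterion value directly from its explicit form. By Proposition \ref{pAopt}, for any $w>0$ we have $\Psi_{-1}(w) = \sum_i w_i^{-1} b_i$, where I write $b_i := \sum_k q_{i,k}^2$ for the squared norm of the $i$-th row of $Q$. Since no row of $Q$ is $0_s^T$, every $b_i$ is strictly positive, so the formula \eqref{eAopt} is well-defined and yields a strictly positive (hence feasible) design. Recalling that a $\Psi_{-1}$-optimal design is exactly a $\Phi_{-1}$-optimal (i.e. $A$-optimal) design, the task reduces to minimizing $\sum_i b_i/w_i$ over the simplex $\{w>0 : \sum_i w_i = 1\}$.

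First I would establish the sharp lower bound via the Cauchy--Schwarz inequality. Writing $\sqrt{b_i} = \sqrt{b_i/w_i}\cdot\sqrt{w_i}$ and applying Cauchy--Schwarz gives
$$\Big(\sum_i \sqrt{b_i}\Big)^2 \leq \Big(\sum_i \frac{b_i}{w_i}\Big)\Big(\sum_i w_i\Big) = \Psi_{-1}(w),$$
using $\sum_i w_i = 1$. Thus $\Psi_{-1}(w) \geq (\sum_i \sqrt{b_i})^2$ for every feasible $w$, with equality precisely when the two vectors in the inequality are proportional, i.e. when $w_i \propto \sqrt{b_i}$. Normalizing to satisfy $\sum_i w_i = 1$ then forces $w_i = \sqrt{b_i}\big/\sum_j \sqrt{b_j}$, which is exactly \eqref{eAopt}. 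Alternatively one may introduce a Lagrange multiplier $\lambda$ for the constraint and solve $-b_i/w_i^2 + \lambda = 0$, again yielding $w_i \propto \sqrt{b_i}$; the convexity of $w \mapsto \sum_i b_i/w_i$ on the positive orthant confirms that this stationary point is the global minimum.

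For the special case of a system of pairwise comparisons, I would invoke the remark preceding Proposition \ref{pAopt} together with Corollary \ref{cAopt}: for such systems the entries of $Q$ lie in $\{0,\pm 1\}$, so $q_{i,k}^2 = |q_{i,k}|$ and hence $b_i = \sum_k q_{i,k}^2 = \sum_k |q_{i,k}| = d_i$, the degree of vertex $i$ in the corresponding graph $G$. Substituting $b_i = d_i$ into \eqref{eAopt} immediately produces \eqref{eAoptPairwise}.

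There is no serious obstacle here: the statement is a routine constrained optimization once the closed form of $\Psi_{-1}$ from Proposition \ref{pAopt} is available. The only points requiring a little care are verifying that the stationary point is a global minimum (handled either by convexity or by the equality condition in Cauchy--Schwarz) and checking feasibility of $w^*$, which follows from the standing assumption that no row of $Q$ vanishes, guaranteeing $b_i>0$ for all $i$.
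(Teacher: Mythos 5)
Your proposal is correct and follows essentially the same route as the paper: the paper simply asserts that the constrained problem $\min \sum_i w_i^{-1}\sum_k q_{i,k}^2$ subject to $\sum_i w_i = 1$ is ``straightforward to solve analytically'' with unique solution \eqref{eAopt}, and your Cauchy--Schwarz (or Lagrange-multiplier) argument is exactly the standard filling-in of that omitted computation. Your added checks --- feasibility via $b_i>0$ from the no-zero-row assumption, global optimality via convexity or the equality case of Cauchy--Schwarz, and the reduction $b_i = d_i$ for pairwise comparisons --- are all sound and consistent with the paper's surrounding remarks.
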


\begin{proof}
	It is straightforward to solve the optimization problem $\min \sum_i w_i^{-1} \sum_k q_{i,k}^2$, such that $\sum_i w_i=1$, analytically. It has a unique solution given by \eqref{eAopt}.
\end{proof}

Note that the $A$-optimal proportions \eqref{eAopt} can be obtained from Corollary 8.8 of \cite{puk}. The formula \eqref{eAoptPairwise} provides a straightforward interpretation of $A$-optimal proportions using the graph terminology. The $A$-optimal value for treatment $i$ is proportional to the square of the degree $d_i$ of the vertex $i$, i.e., the $A$-optimal proportions depend only on the number of times the particular treatments are present in the system $Q^T\tau$, the dependence being a square root. Interestingly, the $A$-optimal treatment proportions depend only on the \emph{local} properties of the graph (specifically, the numbers of neighbors of the vertices), not on the global structure of the graph.

\begin{example}[Example \ref{exSpecialContrasts} cont.]
	The results for the system of contrasts $Q_1^T\tau$ from Example \ref{exSpecialContrasts} are demonstrated in Figure \ref{fSpecialAopt}. In $G_1$, there are four vertices with degree $1$, one vertex with degree $2$ and two vertices with degree $3$. Thus, $\sum_j \sqrt{d_j} = 4+2\sqrt{3}+\sqrt{2}=:S$ and the $A$-optimal design values are $w_1^*=w_4^*=w_6^*=w_7^* = 1/S \approx 0.11$, $w_2=\sqrt{2}/S \approx 0.16$, $w_3^*=w_5^* = \sqrt{3}/S \approx 0.20$.
\end{example}

\begin{figure}[h]
	\centering
	\epsfig{file=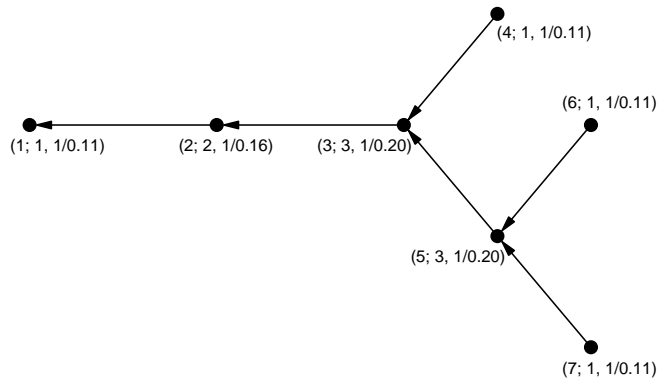, height=7cm}
	\vspace{-30pt}
	\caption{$A$-optimal design $w^*$ for system of contrasts $Q_1^T\tau$ from Example \ref{exSpecialContrasts}. The labels of the vertices are of the form $(i;d_i,\alpha_i)$, where $i$ is the vertex index, $d_i$ is the degree of vertex $i$, and $\alpha_i=1/w_i^*$ is the vertex weight for vertex $i$.}
	\label{fSpecialAopt}
\end{figure}

\subsection{$E$-optimality}

The quadratic form given by the Laplacian of the graph $x^TLx$ is a well-known expression for the unweighted or edge-weighted graphs
\begin{equation}\label{eLapQF}
x^TLx = \sum_{ij \in E} (x_i-x_j)^2, 
\end{equation}
where $ij$ is an edge connecting vertices $i$ and $j$.
It measures the energy of a physical system represented by a given graph, as noted by \cite{Mohar}.
In networks of dynamic agents, $x^TLx$ is denoted as the Laplacian potential of the graph, which measures the total 'disagreement' between agents in the network (see, e.g., \cite{SaberMurray}).

For vertex-weighted graphs, a similar expression of $x^T\L x$ can be established, which allows for expressing $\lambda_{\max}(\L)$ in a useful form (cf. (7.27) of \cite{CvetkovicEA} for normalized Laplacians).

\begin{proposition}
	Let $G=(V,E)$ be a graph with vertex weights $\alpha$. Then,
	$$\lambda_{\max}(\L) 
	= \max_{\sum_{i} x_i^2=1}  \sum_{(i,j) \in E} (\sqrt{\alpha_i}x_i - \sqrt{\alpha_j}x_j)^2
	=
	\max_{\sum_{i} \alpha_i^{-1}y_i^2=1} \sum_{(i,j) \in E} (y_i - y_j)^2
	$$
\end{proposition}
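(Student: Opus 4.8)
The plan is to use the Rayleigh--Ritz variational characterization of the largest eigenvalue of the symmetric matrix $\L = \L_\alpha$, namely $\lambda_{\max}(\L) = \max_{\|x\|=1} x^T \L x$, and then to rewrite the quadratic form $x^T \L x$ as the stated sum over edges.

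First I would exploit the factorization $\L_\alpha = \diag(\alpha^{1/2}) RR^T \diag(\alpha^{1/2})$ recorded in Section \ref{ssGT}. Setting $z := \diag(\alpha^{1/2})x$, i.e.\ $z_i = \sqrt{\alpha_i}\,x_i$, one obtains $x^T\L x = z^T RR^T z = \|R^Tz\|^2$. The crux is to read off the entries of $R^Tz$ from the definition of the incidence matrix: for an edge $e = (i,j)$ directed from $i$ to $j$, the column of $R$ indexed by $e$ has a $+1$ in row $i$ and a $-1$ in row $j$, so $(R^Tz)_e = z_i - z_j = \sqrt{\alpha_i}\,x_i - \sqrt{\alpha_j}\,x_j$. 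Summing the squares over all edges then yields
$$x^T \L x = \sum_{(i,j)\in E} \big(\sqrt{\alpha_i}\,x_i - \sqrt{\alpha_j}\,x_j\big)^2,$$
and combining this with the Rayleigh characterization gives the first equality.

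For the second equality, I would apply the change of variables $y_i := \sqrt{\alpha_i}\,x_i$, which is legitimate because every $\alpha_i > 0$, so the map $x \mapsto y$ is a bijection of $\R^v$. Under this substitution the constraint $\sum_i x_i^2 = 1$ becomes $\sum_i \alpha_i^{-1} y_i^2 = 1$, while the edge-sum objective turns into $\sum_{(i,j)\in E}(y_i - y_j)^2$; since the feasible sets are in bijection and the objective values agree pointwise along this correspondence, the two constrained maxima coincide.

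I do not expect a genuine obstacle here; the only point requiring mild care is the bookkeeping of signs in $R^Tz$, which depends on the orientation convention for the incidence matrix. Reversing the orientation of an edge flips both the corresponding entry of $R$ and the sign of $\sqrt{\alpha_i}x_i - \sqrt{\alpha_j}x_j$, so the squared summand is orientation-independent, consistent with $\L$ being independent of the edge orientations as noted earlier in the excerpt.
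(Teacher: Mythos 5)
Your proof is correct and follows essentially the same route as the paper: the Rayleigh--Ritz characterization $\lambda_{\max}(\L)=\max_{\lVert x\rVert=1}x^T\L x$, expansion of the quadratic form via the factorization $\L_\alpha=\diag(\alpha^{1/2})RR^T\diag(\alpha^{1/2})$ into a sum of squared edge differences, and the substitution $y_i=\sqrt{\alpha_i}\,x_i$ for the second equality. The paper writes the intermediate step coordinatewise as $\sum_{j}\big(\sum_i q_{ij}\sqrt{\alpha_i}x_i\big)^2$ rather than as $\lVert R^Tz\rVert^2$, but this is the same computation, and your remark on orientation-independence of the squared summands is a harmless (and accurate) addition.
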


\begin{proof}
	The largest eigenvalue of $\L$ satisfies
	$$\begin{aligned}
	\lambda_{\max}(\L) 
	&= \max_{\lVert x \rVert = 1} x^T\L x
	= \max_{\sum_{i} x_i^2 = 1}  \sum_{j=1}^s \Big( \sum_{i=1}^v q_{ij} \sqrt{\alpha_i}x_i \Big)^2  \\
	&=\max_{\sum_{i} x_i^2 = 1}  \sum_{(i,j) \in E} (\sqrt{\alpha_i}x_i - \sqrt{\alpha_j}x_j)^2.
	\end{aligned}$$
	The second expression is obtained by setting $x_i=\alpha_i^{-1/2}y_i$.
\end{proof}

Using the derived expression for $\lambda_{\max}(\L)$, we may express the value of the $E$-optimality criterion.

\begin{corollary}
	Let $Q^T\tau$ be a system of pairwise comparisons, let $w>0$ be a feasible design and let $G$ be the corresponding vertex-weighted graph. Then,
	\begin{equation}\label{ePsiE1}
	\Psi_{-\infty}(w) 
	= \max_{\sum_{i} w_i y_i^2 = 1} \sum_{(i,j) \in E} (y_i - y_j)^2 = \max_{\lVert y \rVert_w=1} \sum_{(i,j) \in E} (y_i - y_j)^2,
	\end{equation}
	where $\lVert y \rVert_w:=(\sum_i w_iy_i^2)^{1/2}$,
	and 
	$$
	\Phi_{-\infty}(w)
	= \Big(\max_{\sum_{i} w_i y_i^2} \sum_{(i,j) \in E} (y_i - y_j)^2 \Big)^{-1}
	$$
\end{corollary}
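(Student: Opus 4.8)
The plan is to derive this directly from the two immediately preceding results, so the argument is short. First I would recall from Proposition~\ref{pGraphKiefer} that for a system of pairwise comparisons and a feasible design $w>0$ one has $\Psi_{-\infty}(w)=\lambda_{\max}(\L_w)$, where $\L_w$ is the vertex-weighted Laplacian of the corresponding graph with weights $\alpha_i=w_i^{-1}$. This reduces the claim to re-expressing $\lambda_{\max}(\L_w)$ in the desired variational form.

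Second, I would invoke the preceding Proposition, which gives $\lambda_{\max}(\L)=\max_{\sum_i \alpha_i^{-1}y_i^2=1}\sum_{(i,j)\in E}(y_i-y_j)^2$, applied to the Laplacian $\L_w$. Since the vertex weights attached to $w$ are $\alpha_i=w_i^{-1}$, we have $\alpha_i^{-1}=w_i$, so the constraint $\sum_i \alpha_i^{-1}y_i^2=1$ becomes $\sum_i w_iy_i^2=1$. Substituting this yields
$$\Psi_{-\infty}(w)=\lambda_{\max}(\L_w)=\max_{\sum_i w_iy_i^2=1}\sum_{(i,j)\in E}(y_i-y_j)^2,$$
and the definition $\lVert y\rVert_w:=(\sum_i w_iy_i^2)^{1/2}$ rewrites the constraint as $\lVert y\rVert_w=1$, which is exactly the second equality in \eqref{ePsiE1}.

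Finally, for the $\Phi_{-\infty}$ formula I would use that $\Phi_{-\infty}(w)=1/\Psi_{-\infty}(w)$; this holds because the value $\lambda_{\min}$ defining $\Phi_{-\infty}$ (on $N_Q(w)$, or on the positive spectrum of $C_Q(w)$) is the reciprocal of $\lambda_{\max}(V_Q(w))=\Psi_{-\infty}(w)$, as already recorded when the $\Psi_p$ criteria were introduced. Taking reciprocals of the displayed identity then gives the stated expression for $\Phi_{-\infty}(w)$.

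There is essentially no genuine obstacle here: the content is entirely carried by Proposition~\ref{pGraphKiefer} together with the preceding variational characterization of $\lambda_{\max}(\L)$. The only point requiring care is the bookkeeping of the weight convention $\alpha_i=w_i^{-1}$, so that $\alpha_i^{-1}=w_i$ is substituted in the correct place; everything else is a direct specialization.
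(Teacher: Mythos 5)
Your proposal is correct and is exactly the derivation the paper intends: the corollary is stated as an immediate consequence of the preceding proposition on $\lambda_{\max}(\mathcal{L})$ combined with Proposition~2 (the identity $\Psi_{-\infty}(w)=\lambda_{\max}(\mathcal{L}_w)$), with the substitution $\alpha_i=w_i^{-1}$ turning the constraint $\sum_i\alpha_i^{-1}y_i^2=1$ into $\sum_i w_iy_i^2=1$. Your handling of the weight convention and of $\Phi_{-\infty}(w)=1/\Psi_{-\infty}(w)$ is accurate, so nothing is missing.
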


The value of $\Psi_{-\infty}$ criterion has a straightforward interpretation. The expression $\sum_{(i,j) \in E} (y_i - y_j)^2 $ measures the total variability (potential, energy) of a function $y$ on vertices of $G$. Then, \eqref{ePsiE1} represents the maximum total variability over all $y$, which are normalized in the weighted norm $\lVert y \rVert_w$. Then, an $E$-optimal design minimizes the worst possible variability of $G$ over all vertex values $y$ with normalized weight $\lVert y \rVert_w$. Alternatively, it minimizes the worst possible total 'disagreement' of adjacent values of $y$ over all $y$, which are normalized with respect to the weighted norm $\lVert y \rVert_w$.


The $E$-optimality criterion does not seem to allow for such a direct formula for optimal weights, as \eqref{cAopt}. However, we provide $E$-optimal treatment proportions for a specific class of systems of pairwise comparisons. We will say that a system of pairwise comparisons $Q^T\tau$ is \emph{bipartite} if the corresponding graph $G$ is bipartite. It can be observed that a graph $G$ is bipartite if and only if there exists an orientation of $G$, such that each vertex $i \in V$ is either a sink (with zero vertices directed from $i$) or a source (with zero vertices directed towards $i$). Note that if $G$ is a tree, it is bipartite; thus, any full-rank system of $v-1$ pairwise comparisons is bipartite.

For proving $E$-optimality, we will employ the well-known Equivalence Theorem for $E$-optimality (Theorem 7.22 in \cite{puk}) provided below. By $\Xi$ we denote the set of the competing designs; $\Xi$ must either be the set of all feasible designs or some subset of all feasible designs.
\begin{lemma}
	Let $Q^T\tau$ be a full-rank system.
	A feasible design $w \in \Xi$ with its moment matrix $M$ and information matrix $N_Q$ is 
	$E$-optimal for estimating $Q^T \tau$ in $\Xi$ if and only if there exist a generalized inverse $G$ of $M$ and $E \in \mathfrak{S}_+^s$, $\mathrm{tr}(E)=1$, such that
	\begin{equation}\label{eGETE}
	\mathrm{tr}(M(\tilde{w})GKN_QEN_QK^T  G^T  ) \leq \lambda_{\min}(N_Q) \text{ for all } \tilde{w} \in \Xi.
	\end{equation}
\end{lemma}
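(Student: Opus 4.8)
The plan is to recognize this statement as the $E$-optimality specialization of Pukelsheim's General Equivalence Theorem for estimating the subsystem $K^T\tau$ with $K=Q$, and to derive it from the standard subgradient characterization of optimality for a concave criterion over the convex design region $\Xi$. Since $E$-optimality amounts to maximizing the concave map $w\mapsto\lambda_{\min}(N_Q(w))$, where $N_Q(w)=(Q^TM^{-1}(w)Q)^{-1}$ is the information matrix for $K^T\tau=Q^T\tau$, I would first restate the problem as this concave maximization and then invoke first-order optimality: $w$ is $E$-optimal in $\Xi$ if and only if the one-sided directional derivative of the criterion at $w$ in the direction of any competitor $\tilde{w}\in\Xi$ is nonpositive.

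The second step is to assemble the two ingredients entering that directional derivative. First, the superdifferential of the concave function $\lambda_{\min}$ at a matrix $N$ is the set of $E\in\S_+^s$ with $\mathrm{tr}(E)=1$ whose range lies in the eigenspace of $\lambda_{\min}(N)$; this is where the auxiliary matrix $E$ and the normalization $\mathrm{tr}(E)=1$ originate, and it is exactly the normalization that makes $\mathrm{tr}(N_QE)=\lambda_{\min}(N_Q)$, producing the right-hand side of \eqref{eGETE}. Second, I would use the chain rule through the information-matrix mapping $M\mapsto(K^TM^-K)^{-1}$: differentiating along $M(\tilde{w})-M$ produces exactly the quadratic expression $GKN_QEN_QK^TG^T$, in which a generalized inverse $G$ of $M$ appears because $M$ need not be invertible on the relevant subspace. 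Combining these, the directional derivative is governed by $\mathrm{tr}(M(\tilde{w})GKN_QEN_QK^TG^T)-\lambda_{\min}(N_Q)$.

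The final and most delicate step is to convert the quantifier structure ``for every direction the derivative is $\le 0$'' into the single uniform inequality \eqref{eGETE} that holds for \emph{all} $\tilde{w}\in\Xi$ with one fixed pair $(G,E)$. This is a minimax exchange: one writes optimality as $\max_{\tilde{w}}\min_{E}(\cdots)\le 0$ and swaps the order by a saddle-point argument, which is legitimate because the admissible set of $E$ is compact and convex and the region $\Xi$ is convex. I expect this exchange, together with the non-smoothness of $\lambda_{\min}$ when its minimal eigenvalue is degenerate, to be the main obstacle; handling it correctly is precisely what forces us to work with the full superdifferential (the entire family of admissible $E$) rather than a single gradient, and a secondary technicality is the careful bookkeeping of the generalized inverse $G$ through the differentiation of the information-matrix mapping when $M$ is singular. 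Because all of this is exactly the content of Theorem 7.22 in \cite{puk} specialized to $\Phi=\lambda_{\min}$ and $K=Q$, in practice I would simply cite that theorem and verify that its hypotheses, here the full-rank estimability of $Q^T\tau$, are met.
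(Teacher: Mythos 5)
Your proposal is correct and takes essentially the same route as the paper: the paper states this lemma without proof, presenting it as a restatement of Theorem 7.22 in \cite{puk}, which is precisely what you do after sketching the underlying subgradient derivation. Your sketch (concavity of $\lambda_{\min}$, the superdifferential matrices $E \in \mathfrak{S}_+^s$ with $\mathrm{tr}(E)=1$, the generalized inverse $G$ arising from differentiating the information-matrix mapping, and the minimax exchange) is a faithful summary of Pukelsheim's argument with $K=Q$, so nothing further is needed.
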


However, since we consider also rank-deficient subsystems, we will slightly reformulate the Equivalence Theorem. As stated in Section 8.18 of \cite{puk}, the Equivalence Theorem for $\Phi_p$-optimality holds also in the rank deficient case, only instead of $N_Q(w)$, we have $C_Q(w)=(Q^TM^{-1}(w)Q)^+$ and instead of $\lambda_{\min}(N_Q(w))$, we have $1/\lambda_{\max}(V_Q(w))$). Moreover, if $E=hh^T$, where $h$ is a normalized eigenvector of $V_Q(w)$ corresponding to $\lambda_{\max}(V_Q(w))$, we obtain a simpler expression, which holds for both the full-rank as well as the rank deficient case.

\begin{lemma}
	Let $w \in \Xi$ be a feasible design for $Q^T\tau$ with its moment matrix $M$ and $V_Q:=V_Q(w)$, and let $h$ be an eigenvector of $V_Q$ corresponding to $\lambda_{\max}(V_Q)$, satisfying $\lVert h \rVert = 1$. Then, if there exists a generalized inverse $G$ of $M$, such that
	\begin{equation}\label{eGETE2}
	h^T Q^TG^TM(\tilde{w}) GQh \leq \lambda_{\max}(V_Q) \text{ for all } \tilde{w} \in \Xi,
	\end{equation}
	then $w$ is $E$-optimal for $Q^T\tau$ in $\Xi$.
\end{lemma}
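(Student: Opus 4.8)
The plan is to obtain the stated sufficient condition as a direct specialization of the Equivalence Theorem for $E$-optimality recalled above, via the particular choice $E := hh^T$. Since the lemma asserts only the implication ``\eqref{eGETE2} holds $\Rightarrow$ $w$ is $E$-optimal,'' I would work from the rank-deficient reformulation of that theorem, in which $N_Q$ is replaced by $C_Q = V_Q^+$, the scalar $\lambda_{\min}(N_Q)$ by $1/\lambda_{\max}(V_Q)$, and in which the coefficient matrix $K$ is taken to be our $Q$. The task is then to verify that a generalized inverse $G$ satisfying \eqref{eGETE2} yields a pair $(G,E)$ meeting the hypotheses of that theorem.

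First I would check that $E = hh^T$ is admissible: it is non-negative definite, and since $\lVert h \rVert = 1$ it satisfies $\mathrm{tr}(E) = \mathrm{tr}(hh^T) = h^T h = 1$, as required. Next I would record the spectral fact underpinning the simplification: because $h$ is an eigenvector of $V_Q$ for the eigenvalue $\lambda_{\max}(V_Q)$, which is positive for any feasible $w$, the vector $h$ lies in the range of $V_Q$ and is therefore an eigenvector of the pseudoinverse $C_Q = V_Q^+$ for the reciprocal eigenvalue, i.e. $C_Q h = \lambda_{\max}^{-1}(V_Q)\,h$. Consequently $C_Q E C_Q = C_Q hh^T C_Q = \lambda_{\max}^{-2}(V_Q)\,hh^T$.

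Substituting this identity into the trace appearing in the rank-deficient optimality condition, and using the cyclic invariance of the trace to rewrite $\mathrm{tr}(M(\tilde w)\,GQ\,hh^T\,Q^T G^T)$ as the scalar quadratic form $h^T Q^T G^T M(\tilde w) GQ h$, the inequality $\mathrm{tr}(M(\tilde w)\,GQ\,C_Q E C_Q\,Q^T G^T) \le 1/\lambda_{\max}(V_Q)$ becomes $\lambda_{\max}^{-2}(V_Q)\,h^T Q^T G^T M(\tilde w) GQ h \le \lambda_{\max}^{-1}(V_Q)$. Multiplying through by the positive factor $\lambda_{\max}^{2}(V_Q)$ reduces it to precisely \eqref{eGETE2}. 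Thus, if \eqref{eGETE2} holds for all $\tilde w \in \Xi$, the pair $(G, hh^T)$ satisfies the conditions of the Equivalence Theorem, and $E$-optimality of $w$ follows.

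I do not expect a genuine obstacle, as the argument is a bookkeeping specialization rather than a new idea; the only points demanding care are the eigenvector--pseudoinverse correspondence $C_Q h = \lambda_{\max}^{-1}(V_Q)\,h$, which is valid exactly because $\lambda_{\max}(V_Q)$ is a \emph{positive} eigenvalue, and the consistent application of the substitutions $N_Q \mapsto C_Q$ and $\lambda_{\min}(N_Q) \mapsto 1/\lambda_{\max}(V_Q)$, so that the single computation covers both the full-rank and the rank-deficient cases at once.
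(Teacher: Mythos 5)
Your proposal is correct and follows essentially the same route as the paper's proof: both specialize the Equivalence Theorem to $E=hh^T$, use the eigenvector relation $C_Q h=\lambda_{\max}^{-1}(V_Q)\,h$ (respectively $N_Q h=\lambda_{\max}^{-1}(V_Q)\,h$ in the full-rank case), and rearrange the normality inequality into \eqref{eGETE2}. The only cosmetic difference is that the paper treats the full-rank and rank-deficient cases separately, whereas you observe that the pseudoinverse computation covers both at once --- a harmless and slightly tidier packaging of the same argument.
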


\begin{proof}
	Let $E=hh^T$.
	Suppose that $Q^T\tau$ is a full-rank system. From $N_Qh = \lambda_{\max}^{-1}(V_Q)h$, it follows that the left-hand side of \eqref{eGETE} is equal to $\lambda_{\max}^{-2}(V_Q) h^T Q^TG^TM(\tilde{w}) GQh$. Then, \eqref{eGETE} can be rearranged to \eqref{eGETE2}.
	
	If $Q^T\tau$ is rank deficient, the normality inequality of the Equivalence Theorem becomes $\mathrm{tr}(M(\tilde{w})GKV^+EV^+K^T  G^T  ) \leq 1/\lambda_{\max}(V_Q) \text{ for all } \tilde{w} \in \Xi.$ Since $h$ is an eigenvector of $V_Q$ corresponding to $\lambda_{\max}(V_Q)$, it is an eigenvector of $V_Q^+$ corresponding to $1/\lambda_{\max}(V_Q)$, which yields the same inequality as in the full-rank case.
\end{proof}

\begin{theorem}\label{tEopt}
	Let $Q^T\tau$ be a bipartite system of pairwise comparisons and let $G$ be the corresponding graph. Then, the treatment proportions
	\begin{equation}\label{eEopt}
	w_i^*=\frac{d_i}{\sum_j d_j}, \quad i=1,\ldots,v
	\end{equation}
	are $E$-optimal for $Q^T\tau$ with the optimal value $\Psi_{-\infty}(w^*)=\lambda_{\max}(V(w^*))=2\sum_i d_i = 4s$. Moreover, if each vertex of $G$ is either a sink or a source, then $h=1_s/\sqrt{s}$ is an eigenvector of $V_Q(w^*)$ corresponding to $\lambda_{\max}(V_Q(w^*))$.
\end{theorem}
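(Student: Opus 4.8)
The plan is to evaluate the criterion at $w^*$ using the variational formula \eqref{ePsiE1}, and then to certify optimality with the reformulated $E$-optimality Equivalence Theorem \eqref{eGETE2}. Throughout I write $V_Q(w) = Q^T\diag(w^{-1})Q = R^T\diag(w^{-1})R$, and I note that the handshake identity $\sum_j d_j = 2s$ turns the proposed weights into $w_i^* = d_i/(2s)$, which is feasible since no row of $Q$ is zero forces every $d_i \geq 1$.

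First I would compute the optimal value. Since $(y_i-y_j)^2 \leq 2(y_i^2+y_j^2)$ and each vertex $i$ lies on exactly $d_i$ edges, summing over $E$ gives $\sum_{(i,j)\in E}(y_i-y_j)^2 \leq 2\sum_i d_i y_i^2$. Under the normalization $\sum_i \frac{d_i}{2s}y_i^2 = 1$ the right-hand side equals $2\cdot 2s = 4s$, so \eqref{ePsiE1} yields $\Psi_{-\infty}(w^*) \leq 4s$. Bipartiteness is exactly what makes this tight: labelling the two parts by $y_i=+1$ and $y_i=-1$ turns each edge into a $\pm$-jump, forcing $(y_i-y_j)^2 = 4 = 2(y_i^2+y_j^2)$; this $y$ is $w^*$-normalized and attains the bound, so $\Psi_{-\infty}(w^*) = \lambda_{\max}(V_Q(w^*)) = 4s = 2\sum_i d_i$.

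Next I would pin down the eigenvector and reduce to a convenient orientation. Reversing an edge negates a column of $R$, i.e.\ replaces $R$ by $RD$ for a signature matrix $D$, hence replaces $V_Q$ by $DV_QD$; this is a similarity, so $\lambda_{\max}(V_Q)$, and therefore the entire $E$-optimality problem, is independent of the orientation. Since $G$ is bipartite I may reorient so that every vertex is a sink or a source. Taking $h=1_s/\sqrt{s}$, a direct computation gives $(Rh)_i = \pm d_i/\sqrt{s}$ (the sign $+$ at sources, $-$ at sinks); thus $M^{-1}(w^*)Rh$ has entries $\pm 2\sqrt{s}$, and applying $R^T$ returns $4\sqrt{s} = 4s\,h_e$ on every edge. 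Hence $V_Q(w^*)h = 4s\,h$, which identifies $h$ as a top eigenvector and establishes the ``Moreover'' claim.

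Finally I would invoke the Equivalence Theorem in the form \eqref{eGETE2} with the nonsingular choice $G = M^{-1}(w^*)$ and this $h$. Setting $z := M^{-1}(w^*)Qh$, the computation above shows $z_i = \pm 2\sqrt{s}$, so $z_i^2 = 4s$ for every $i$. The left-hand side of \eqref{eGETE2} then equals $z^T M(\tilde w)z = \sum_i \tilde w_i z_i^2 = 4s\sum_i \tilde w_i = 4s = \lambda_{\max}(V_Q(w^*))$ for every competing design $\tilde w$, so the normality inequality holds (with equality) and $w^*$ is $E$-optimal. The only genuine subtlety is the orientation reduction combined with the identification of the correct eigenvector $h$; once $z$ is computed and seen to have constant squared entries, the equivalence-theorem step is immediate and handles the full-rank and rank-deficient cases uniformly.
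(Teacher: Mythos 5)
Your proof is correct, and it certifies optimality exactly as the paper does in its final step --- via the reformulated Equivalence Theorem \eqref{eGETE2} with $G=M^{-1}(w^*)$ and the eigenvector $h=1_s/\sqrt{s}$ obtained after reorienting so that every vertex is a sink or a source (your $z=M^{-1}(w^*)Qh$ with $z_i^2\equiv 4s$ is the paper's computation $\frac{1}{s}\,g^T\diag(\tilde{w}_1w_1^{-2},\ldots,\tilde{w}_vw_v^{-2})\,g=4s$ in different notation). Where you genuinely depart from the paper is the key step of identifying $\lambda_{\max}(V_Q(w^*))=4s$. The paper works in edge space: it writes out the entries of the edge-indexed Gram matrix $V_Q$ (diagonal $w_i^{-1}+w_j^{-1}$, off-diagonal $w_i^{-1}$ at a shared vertex, where the sink/source orientation is needed precisely to force the nonnegative sign pattern $q_{i,k}q_{i,\ell}=1$) and then applies the Cauchy--Schwarz inequality grouped by vertices. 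You instead work in vertex space via \eqref{ePsiE1}: the elementary bound $(y_i-y_j)^2\le 2(y_i^2+y_j^2)$ summed over edges gives $\sum_{(i,j)\in E}(y_i-y_j)^2\le 2\sum_i d_iy_i^2=4s$ under the $w^*$-normalization, and the bipartition sign vector with entries $\pm 1$ attains it. Your route is shorter, is orientation-free for the value computation, and cleanly isolates where bipartiteness enters: the upper bound $\Psi_{-\infty}(w^*)\le 4s$ holds for \emph{any} graph with $w^*_i=d_i/(2s)$, bipartiteness being used only for attainment --- which incidentally illuminates the paper's non-bipartite counterexample, where $\lambda_{\max}(V_Q(w))\approx 13.83$ falls strictly below $4s=16$. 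Your signature-matrix remark ($R\mapsto RD$, hence $V_Q\mapsto DV_QD$, an orthogonal similarity for every design $w$) is also a tighter formalization of the paper's one-line claim that reorientation is harmless, since it transfers the whole $E$-optimality problem, not merely the eigenvector computation, across orientations. What the paper's longer computation buys in exchange is the explicit edge-Gram structure of $V_Q$, which your argument bypasses entirely.
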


\begin{proof}
	Suppose that each vertex of $G$ is either a sink or a source. If it were not, we could change the orientations of the edges, which does not affect the Laplacian and thus it does not affect $\lambda_{\max}(V_Q)$ either.
	We denote $w:=w^*$, $k:=\sum_j d_j$ and $V_Q:=V_Q(w)$. First, we calculate $Qh = Q1_s/\sqrt{s} = g/\sqrt{s}$, where $g_i$ is either $d_i$ or $-d_i$, which we denote as $g_i = \pm d_i$, $i=1,\ldots,v$. Then,
	$$V_Qh = Q^TM^{-1}(w)g/\sqrt{s} = 2k1_s/\sqrt{s} = 2k h,$$
	because $w_i^{-1}g_i = \pm k$ for all $i$.
	Hence, $h$ is an eigenvector of $V_Q$ corresponding to $\lambda^*:=2k=4s$. Now, we will prove that $\lambda^* = \lambda_{\max}(V_Q)$.
	
	The $k,\ell$-th element of $V_Q$, $v_{k,\ell}$, satisfies $v_{k,\ell} = \sum_i w^{-1}q_{i,k}q_{i,\ell}$. The graph representation yields that the indices $k$ and $\ell$ represent edges and provides a formula for $v_{k,\ell}$. For $k=\ell$, we have
	$$v_{k,k} = 
	w_i^{-1}+w_j^{-1}, \text{ where } k=(i,j)
	$$
	and since every vertex is a sink or a source, for $k\neq\ell$ we obtain
	$$v_{k,\ell} = \begin{cases}
	w_i^{-1}, &\text{there exists a vertex $i$, which is incident with both $k$ and $\ell$,} \\
	0, &\text{otherwise.}
	\end{cases} $$
	Then,
	$\lambda_{\max}(V_Q) = \max_{\lVert x \rVert = 1} x^T V_Q(w) x$. Let us alternatively denote the edges by the vertices they connect, i.e., the edge $e=(i,j)$ will be denoted as an unordered pair $ij$. Similarly, we index the elements of $x$ in $x^T V_Q x$ by the corresponding pairs of vertices. Then,
	$$ x^T V_Q x = \sum_{i_1j_1 \in E} \sum_{i_2j_2 \in E} v_{i_1j_1,i_2j_2}x_{i_1j_1} x_{i_2j_2} = \sum_{i=1}^v w_i^{-1} \sum_{p \sim i} \sum_{q \sim i} x_{ip} x_{i q} = k \sum_{i=1}^v d_i^{-1} \sum_{p \sim i} \sum_{q \sim i} x_{ip} x_{i q} $$
	for any $x \in \R^s$. Let us denote as $n(i)$ the set of all vertices adjacent to $i$, $i=1,\ldots,v$. Then,
	$$\lambda_{\max}(V_Q) = x^T V_Q x = \max_{\lVert x \rVert = 1} k \sum_{i=1}^v d_i^{-1} \big(\sum_{j \in n(i)} x_{ij}\big)^2.$$
	The Cauchy-Schwarz inequality yields $(\sum_{j \in n(i)} x_{ij})^2 \leq d_i \sum_{j \in n(i)} x_{ij}^2$ and thus 
	$$\lambda_{\max}(V_Q) \leq k \max_{\lVert x \rVert = 1} \sum_{i=1}^v \sum_{j \in n(i)} x_{ij}^2 = 2k \max_{\lVert x \rVert = 1} \sum_{(i,j)\in E} x_{ij}^2 = 2k = \lambda^*.$$
	It follows that $\lambda^* = \lambda_{\max}(V_Q)$.
	
	Let $E=hh^T$ and $G=M^{-1}(w)$ and recall that $Qh = g /\sqrt{s}$, where $g_i=\pm d_i$. Then, the left-hand side of \eqref{eGETE2} is
	$$\frac{1}{s} g^T \diag(\tilde{w}_1w_1^{-2}, \ldots, \tilde{w}_v w_v^{-2})g = \frac{1}{s} k^2 \sum_{i=1}^v \tilde{w}_i d_i^{-2} g_i^2 = \frac{k^2}{s} \sum_{i=1}^v \tilde{w}_i = \frac{k^2}{s} = 4s, $$
	which is equal to the right-hand side. Thus, $w$ is $E$-optimal.
\end{proof}

For the subclass of bipartite systems of pairwise comparisons, we obtained an analytical expression for $E$-optimal treatment proportions, similar to the formula \eqref{eAopt} for $A$-optimality. Only instead of square roots of the vertex degrees, the $E$-optimal treatment designs are proportional directly to the vertex degrees, i.e., to the numbers of times the corresponding treatments are present in $Q^T\tau$. 

Similar to the $A$-optimal treatment proportions, the $E$-optimal proportions for bipartite systems depend only on the local properties of the graph - the numbers of the neighbors of the vertices. However, for general systems of pairwise comparisons, this need not be true. The global properties of the graphs are embedded in the assumption that the graph is bipartite. In the presence of cycles, the simple dependence of the $E$-optimal designs only on the degrees of the vertices need not hold.

Note that if the bipartite graph $G$ does not consist of only sinks and sources, a normalized eigenvector $h$ corresponding to $\lambda_{\max}(V_Q(w^*))$ can be obtained as follows. The elements of $h$ belong to the set $\{-1/\sqrt{s},1/\sqrt{s}\}$ and for any vertex $i$ the following holds: the signs of all elements of $h$ corresponding to the edges directed from $i$ are the same, and they are opposite to the signs corresponding to the edges directed to $i$. Such eigenvector $h$ can be constructed by arbitrarily choosing one value $h_k \in \{-1/\sqrt{s},1/\sqrt{s}\}$ and then iteratively obtaining the signs of the incident edges. The choices $h_k=-1/\sqrt{s}$, $k \in K \subseteq \{1,\ldots, s\}$, can be thought of as reversing the direction of edges $k \in K$ in order to convert all vertices to sinks or sources; the choices $h_k =1/\sqrt{s}$ mean maintaining the original directions of the corresponding edges.

\begin{example}[Example \ref{exSpecialContrasts} cont.]
	The $E$-optimal design $w^*$ for the system of contrasts $Q_1^T\tau$ from Example \ref{exSpecialContrasts} is given in Figure \ref{fSpecialEopt}. The total degree of $G_1$ is $\sum_j d_j = 12$ and thus the particular design values are $w_1^*=w_4^*=w_6^*=w_7^*=1/12$, $w_2^*=1/6$ and $w_3^*=w_5^*=1/4$. The eigenvector $h$ of $V_Q(w^*)$ corresponding to $\lambda_{\max}(V_Q(w^*))=2\sum_j d_j = 24$ is $h=(1,-1,1,1,-1,-1)^T/\sqrt{6}$, which is also represented in Figure \ref{fSpecialEopt} by values $+1$ or $-1$ on the edges. Notice that by reversing the directions of all edges for which $h_k=-1/\sqrt{6}$, all vertices of $G_1$ become sinks or sources.
\end{example}

\begin{figure}[h]
	\centering
	\epsfig{file=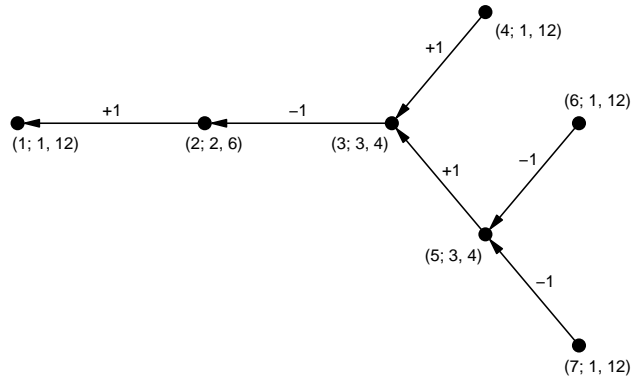, height=7cm}
	\vspace{-30pt}
	\caption{$E$-optimal design $w^*$ for the system of contrasts $Q_1^T\tau$ from Example \ref{exSpecialContrasts}. The labels of the vertices are of the form $(i;d_i,\alpha_i)$, where $i$ is the vertex index, $d_i$ is the degree of vertex $i$, and $\alpha_i=1/w_i^*$ is the vertex weight for vertex $i$. The values $+1$ or $-1$ on the edges represent the signs of the respective elements of the eigenvector $h$ of $V_Q(w^*)$ corresponding to $\lambda_{\min}(V_Q(w^*))$.}
	\label{fSpecialEopt}
\end{figure}

Note that the treatment proportions given by Theorem \ref{tEopt} need not be optimal for general systems of pairwise comparisons, as demonstrated in the following example.

\begin{example}
	Let $v=4$ and consider estimating $\tau_1-\tau_2$, $\tau_2-\tau_3$, $\tau_3-\tau_1$ and $\tau_1-\tau_4$. The degrees of the vertices in the corresponding graph $G$ are $d_1 = 3$, $d_2=2$, $d_3=2$ and $d_4=1$. The treatment proportions given by \eqref{eEopt} are $w=(3/8,1/4,1/4,1/8)^T$ and it can be calculated that $\lambda_{\max}(V_Q(w)) \approx 13.8297$. For $\tilde{w}=(0.38,0.23,0.23,0.16)^T$, it can be calculated that $\lambda_{\max}(V(\tilde{w})) \approx 13.0435< \lambda_{\max}(V_Q(w))$. Thus, $w$ is not $E$-optimal.
\end{example}

\section{Symmetric systems of contrasts}\label{sSymmetric}

If a system of contrasts is 'symmetric', intuitively, the uniform design $\bar{w}=1_v/v$ should be optimal with respect to a wide range of optimality criteria. Using the graph representation, we may obtain such symmetric systems of contrasts.

In Section \ref{ssPerm} we noted that if $\pi$ is an automorphism, the matrix $\L_{P_\pi w}$ is orthogonally similar to $\L_w$. Therefore, Theorem \ref{tEigLap} yields that $\Phi(P_\pi w) = \Phi(w)$ for any orthogonally invariant $\Phi$. This observation is a useful tool in proving the optimality of $\bar{w}$ for a 'symmetric' system of contrasts.

\begin{lemma}\label{lLapPerm}
	Let $w$ be a feasible design for estimating a system of pairwise comparisons $Q^T\tau$ and let $\pi$ be an automorphism of the corresponding graph $G$. Then,
	\begin{equation}\label{eLapPerm}
	\mathcal{L}_{P_\pi w} = P_\pi \mathcal{L}_w P_\pi^T.
	\end{equation}
	and $\Phi(P_\pi w) = \Phi(w)$ for any orthogonally invariant information function $\Phi$.
\end{lemma}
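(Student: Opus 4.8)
The plan is to establish the matrix identity \eqref{eLapPerm} first, and then to deduce the invariance of $\Phi$ as an immediate consequence of Theorem \ref{tEigLap}. Both halves are in essence a careful reassembly of facts already recorded in Sections \ref{ssGT} and \ref{ssPerm}: that $\mathcal{L}_w = \diag(w^{-1/2}) QQ^T \diag(w^{-1/2})$ (using $R=Q$ for a system of pairwise comparisons), that $M(P_\pi w) = P_\pi M(w) P_\pi^T$, and that an automorphism $\pi$ satisfies $P_\pi QQ^T P_\pi^T = QQ^T$ (equivalently $P_\pi RR^T P_\pi^T = RR^T$).

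For \eqref{eLapPerm} I would start from the definition $\mathcal{L}_{P_\pi w} = \diag((P_\pi w)^{-1/2}) QQ^T \diag((P_\pi w)^{-1/2})$. The one point needing a word is that the component-wise operation $w \mapsto w^{-1/2}$ commutes with relabelling, i.e.\ $(P_\pi w)^{-1/2} = P_\pi (w^{-1/2})$, whence $\diag((P_\pi w)^{-1/2}) = P_\pi \diag(w^{-1/2}) P_\pi^T$. Substituting this for both diagonal factors and regrouping gives
\[
\mathcal{L}_{P_\pi w} = P_\pi \diag(w^{-1/2}) \big( P_\pi^T QQ^T P_\pi \big) \diag(w^{-1/2}) P_\pi^T .
\]
Since $\pi$ is an automorphism, $P_\pi QQ^T P_\pi^T = QQ^T$, equivalently $P_\pi^T QQ^T P_\pi = QQ^T$, so the bracketed factor collapses to $QQ^T$ and what remains is exactly $P_\pi \mathcal{L}_w P_\pi^T$.

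For the second claim I would observe that $P_\pi$ is a permutation matrix, hence orthogonal, so \eqref{eLapPerm} exhibits $\mathcal{L}_{P_\pi w}$ and $\mathcal{L}_w$ as orthogonally similar; in particular they share the same eigenvalues, counted with multiplicity. By Theorem \ref{tEigLap} the value $\Phi(P_\pi w)$ is obtained by applying the same function $\phi$ to the reciprocals of the positive eigenvalues of $\mathcal{L}_{P_\pi w}$ (padded with zeros in the rank-deficient case); equality of the two spectra then forces $\Phi(P_\pi w) = \Phi(w)$.

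I do not expect a genuine obstacle, since the real content sits in Theorem \ref{tEigLap}, which we may invoke. The only care points are bookkeeping ones: verifying that component-wise powers commute with the permutation, noting that $P_\pi w$ remains feasible (immediate, as $P_\pi$ merely permutes the strictly positive entries of $w$), and pinning down that the automorphism hypothesis is used at precisely the step where $P_\pi^T QQ^T P_\pi = QQ^T$ is invoked — this being the one place where the combinatorial structure of $G$, rather than a mere relabelling, genuinely enters.
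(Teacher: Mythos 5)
Your proposal is correct and follows essentially the same route as the paper, which (in Sections \ref{ssGT} and \ref{ssPerm}) derives \eqref{eLapPerm} by exactly your conjugation computation --- $\diag((P_\pi w)^{-1/2}) = P_\pi\diag(w^{-1/2})P_\pi^T$ together with $P_\pi RR^T P_\pi^T = RR^T$ for an automorphism --- and then obtains $\Phi(P_\pi w)=\Phi(w)$ from the orthogonal similarity of the two Laplacians via Theorem \ref{tEigLap}. Your bookkeeping remarks (component-wise powers commute with permutation; feasibility of $P_\pi w$) only make explicit what the paper leaves implicit, so there is nothing to correct.
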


The following theorem shows that if a cyclic permutation is an automorphism of $G$, the uniform treatment design is $\Phi$-optimal for $Q^T\tau$ for any orthogonally invariant information function. Recall that a permutation is cyclic if it consists of only one cycle.

\begin{theorem}\label{tCyclic}
	Let $Q^T\tau$ be a system of pairwise comparisons $\tau_i-\tau_j$ and let $G$ be the corresponding graph. Suppose that there exists a cyclic permutation $\pi$, which is an automorphism of $G$. Then, $\bar{w}=1_v/v$ is  $\Phi$-optimal for estimating $Q^T\tau$ with respect to any orthogonally invariant information function $\Phi$.
\end{theorem}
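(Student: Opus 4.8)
The plan is to exploit the averaging argument over the cyclic group generated by $\pi$, combined with the concavity of $\Phi$ and the fact established in Lemma \ref{lLapPerm} that $\Phi(P_\pi w) = \Phi(w)$ for any automorphism $\pi$. First I would observe that since $\pi$ is a cyclic permutation on $\{1,\ldots,v\}$, the powers $\pi^0, \pi^1, \ldots, \pi^{v-1}$ are all distinct, and for any fixed design $w$ the average design
\begin{equation*}
\tilde{w} := \frac{1}{v}\sum_{m=0}^{v-1} P_{\pi}^m w
\end{equation*}
is a genuine design (its entries are nonnegative and sum to one, since each $P_\pi^m w$ is a design). The key point is that because $\pi$ is a single $v$-cycle, averaging over all its powers makes $\tilde{w}$ invariant under $P_\pi$; moreover, each coordinate of $\tilde{w}$ is the average of all $v$ coordinates of $w$ as the index runs through the orbit, and since a cyclic permutation has a single orbit containing every vertex, every coordinate of $\tilde{w}$ equals $\frac{1}{v}\sum_i w_i = \frac{1}{v}$. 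Hence $\tilde{w} = 1_v/v = \bar{w}$.

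The second ingredient is the invariance of $\Phi$ under the automorphism. Since $\pi$ is an automorphism of $G$, so is each power $\pi^m$, and Lemma \ref{lLapPerm} gives $\Phi(P_\pi^m w) = \Phi(w)$ for every $m$ (applying the lemma repeatedly, or noting that $P_\pi^m = P_{\pi^m}$ corresponds to the automorphism $\pi^m$). Then concavity of the information function $\Phi$ yields the chain
\begin{equation*}
\Phi(\bar{w}) = \Phi\Big(\tfrac{1}{v}\sum_{m=0}^{v-1} P_\pi^m w\Big) \geq \frac{1}{v}\sum_{m=0}^{v-1} \Phi(P_\pi^m w) = \frac{1}{v}\sum_{m=0}^{v-1}\Phi(w) = \Phi(w).
\end{equation*}
Since $w$ was an arbitrary feasible design, this shows $\bar{w}$ maximizes $\Phi$, i.e. $\bar{w}$ is $\Phi$-optimal. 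I would note that the concavity step is exactly the standard symmetrization argument already used in the proof of the $\Psi_0$-optimality theorem earlier in the paper, so the structure is familiar and the invariance supplied by Lemma \ref{lLapPerm} is what makes it applicable here.

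The main obstacle, and the place requiring the most care, is verifying that averaging over the powers of a \emph{cyclic} permutation actually produces the uniform design rather than merely some $\pi$-invariant design. This is where the hypothesis that $\pi$ is a single $v$-cycle (an automorphism consisting of one orbit covering all vertices) is essential: a $P_\pi$-invariant vector must be constant on each orbit of $\pi$, and having a single orbit forces it to be globally constant, hence equal to $\bar{w}$ after normalization. I would make this orbit argument explicit, since for a non-cyclic automorphism the averaged design would only be constant within orbits and the conclusion could fail. A minor technical remark to include is that feasibility is preserved throughout: each $P_\pi^m w > 0$ because permutation merely reorders positive entries, and $\bar{w} = 1_v/v > 0$ is feasible, so $\Phi$ is well-defined at every design appearing in the argument.
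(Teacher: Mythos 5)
Your proposal is correct and follows essentially the same route as the paper's proof: expressing $\bar{w}=\frac{1}{v}\sum_{m=0}^{v-1}P_\pi^m w$, invoking Lemma \ref{lLapPerm} (applied to the powers $\pi^m$, which are again automorphisms) to get $\Phi(P_\pi^m w)=\Phi(w)$, and concluding by concavity of $\Phi$. Your explicit orbit argument for why the average over the powers of a single $v$-cycle equals the uniform design is a point the paper states without elaboration, but it is the same underlying step, not a different approach.
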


\begin{proof}
	Let $i \in 1,\ldots,v-1$, and let $w>0$ be a feasible treatment proportions design. Note that if $\pi$ is an automorphism, $\mathcal{L}_{P_\pi^i w} = P_\pi^i \mathcal{L}_w (P_\pi^i)^T$ and thus $\Phi(P_\pi^i w) = \Phi(w)$ holds for any $i\in\mathbb{N}$.
	
	Since $\pi$ is a cyclic permutation, the uniform treatment design can be expressed as 
	$$\bar{w} = \frac{1}{v}\sum_{i=0}^{v-1} P_\pi^i w$$
	and hence,
	$$\begin{aligned}
	\Phi\big(\bar{w}\big)
	&=
	\Phi\left(\frac{1}{v}\sum_{i=0}^{v-1} P_\pi^iw\right) \geq
	\frac{1}{v}\sum_{i=0}^{v-1} \Phi(P_\pi^iw)=  \\
	&= \frac{1}{v}\sum_{i=0}^{v-1} \Phi(w) = \frac{1}{v}v \Phi(w) = \Phi(w),
	\end{aligned}$$
	where the inequality follows from the concavity of $\Phi$.
	Thus, $\bar{w}$ is $\Phi$-optimal.
\end{proof}

Note that for \eqref{eLapPerm} to hold, it is sufficient for $\pi$ to satisfy $P_\pi A P_\pi^T = A$, or equivalently, $P_\pi RR^T P_\pi^T = RR^T.$
That is, it is not necessary to preserve the orientation of the edges. It follows from the fact that the criterial value $\Phi$ for any orthogonally invariant function $\Phi$ is determined by the eigenvalues of the weighted Laplacian, which does not depend on the orientation of the edges.

Furthermore, note that the proof of Theorem \ref{tCyclic} does not employ the condition that $Q^T\tau$ is a system of pairwise comparisons. Hence, we may formulate a theorem for general systems of contrasts. We say that a system of contrasts $Q^T\tau$ (not necessarily a system of pairwise comparisons) is \emph{cyclic} if there exists a cyclic permutation $\pi$ satisfying
\begin{equation}\label{ePerm}
P_\pi QQ^T P_\pi^T = QQ^T.
\end{equation}

\begin{theorem}\label{tOrthOptGen}
	Let $Q^T\tau$ be a cyclic system of contrasts. Then, $\bar{w}=1_v/v$ is $\Phi$-optimal for estimating $Q^T\tau$ with respect to any orthogonally invariant information function $\Phi$.
\end{theorem}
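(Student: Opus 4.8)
The plan is to reuse the structure of the proof of Theorem~\ref{tCyclic} essentially verbatim, the only modification being that the permutation-invariance $\Phi(P_\pi w)=\Phi(w)$ must now be deduced directly from the algebraic hypothesis $P_\pi QQ^T P_\pi^T = QQ^T$ in \eqref{ePerm}, rather than from the graph-automorphism picture underlying Lemma~\ref{lLapPerm}. Once that invariance is in hand, the averaging-and-concavity argument carries over unchanged, since (as the paper already notes) that argument never used the pairwise-comparison assumption.

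First I would establish that $V_Q(P_\pi w)$ and $V_Q(w)$ have the same positive eigenvalues, including multiplicities. Writing $M^{-1/2}(P_\pi w)=P_\pi M^{-1/2}(w)P_\pi^T$ (valid because $P_\pi$ is orthogonal and $M$ is diagonal), and applying the same $X^TX$-versus-$XX^T$ device used in the proof of Theorem~\ref{tEigLap}, the positive spectrum of $V_Q(P_\pi w)=Q^T P_\pi M^{-1}(w)P_\pi^T Q$ coincides with that of
\begin{equation*}
P_\pi M^{-1/2}(w)\big(P_\pi^T QQ^T P_\pi\big)M^{-1/2}(w)P_\pi^T .
\end{equation*}
Rewriting \eqref{ePerm} as $P_\pi^T QQ^T P_\pi = QQ^T$ turns the middle factor into $QQ^T$, so this matrix equals $P_\pi\big[M^{-1/2}(w)QQ^T M^{-1/2}(w)\big]P_\pi^T$, which is orthogonally similar to $M^{-1/2}(w)QQ^T M^{-1/2}(w)$ and hence shares the positive spectrum of $V_Q(w)$. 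Because $\Phi$ is orthogonally invariant, i.e.\ a function of the positive eigenvalues of $C_Q$ (or $N_Q$), which are the reciprocals of the positive eigenvalues of $V_Q$, this gives $\Phi(P_\pi w)=\Phi(w)$. Iterating \eqref{ePerm} yields $P_\pi^i QQ^T (P_\pi^i)^T = QQ^T$ for every $i$, so the same computation gives $\Phi(P_\pi^i w)=\Phi(w)$ for all $i\in\mathbb{N}$.

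Next I would exploit that $\pi$ is a \emph{single} $v$-cycle. The powers $P_\pi^0,\ldots,P_\pi^{v-1}$ of a $v$-cycle permutation matrix sum to the all-ones matrix $J_v$ (for each pair of indices exactly one power realizes the corresponding map), and since $\sum_j w_j=1$ the cyclic average collapses to the uniform design:
\begin{equation*}
\frac{1}{v}\sum_{i=0}^{v-1}P_\pi^i w = \frac{1}{v}J_v w = \frac{1}{v}\Big(\sum_j w_j\Big)1_v = \frac{1}{v}1_v = \bar{w}.
\end{equation*}
Using the concave dependence of the information matrix on the design, already invoked in the earlier optimality proofs, I then obtain
\begin{equation*}
\Phi(\bar{w}) = \Phi\Big(\tfrac{1}{v}\sum_{i=0}^{v-1}P_\pi^i w\Big) \geq \frac{1}{v}\sum_{i=0}^{v-1}\Phi(P_\pi^i w) = \frac{1}{v}\sum_{i=0}^{v-1}\Phi(w) = \Phi(w)
\end{equation*}
for every feasible $w>0$, which is precisely the $\Phi$-optimality of $\bar{w}$.

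The main obstacle is the invariance step of the second paragraph: in the pairwise case this was supplied for free by the clean Laplacian identity of Lemma~\ref{lLapPerm}, whereas here one must argue spectrally through $V_Q$ and $QQ^T$ directly and check that \eqref{ePerm} is exactly the hypothesis needed to turn the conjugated matrix into an orthogonal similarity. A secondary point worth stating explicitly is that it is the \emph{cyclicity} of $\pi$, a single transitive cycle, that forces $\sum_i P_\pi^i = J_v$ and hence makes the orbit average equal to $\bar{w}$; a general automorphism would only establish optimality of the $\pi$-symmetrized design, not of the uniform one.
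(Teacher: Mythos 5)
Your proposal is correct and takes essentially the same route as the paper: the paper's proof of Theorem~\ref{tOrthOptGen} defines $\mathcal{L}_w := M^{-1/2}(w)QQ^TM^{-1/2}(w)$ formally for an arbitrary coefficient matrix $Q$ and then replays the proof of Theorem~\ref{tCyclic}, which is exactly your invariance--averaging--concavity argument. You merely spell out two steps the paper leaves implicit --- deducing $\Phi(P_\pi w)=\Phi(w)$ from \eqref{ePerm} via the $X^TX$ versus $XX^T$ spectral identity (the general-$Q$ analogue of Lemma~\ref{lLapPerm}), and verifying $\sum_{i=0}^{v-1}P_\pi^i = J_v$ for a single $v$-cycle --- both of which are correct.
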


\begin{proof}
	The proof is analogous to the proof of Theorem \ref{tCyclic}; the matrix $\mathcal{L}_w$ is not considered to be the Laplacian matrix of the corresponding graph, but formally, $\mathcal{L}_w:=M^{-1/2}(w)QQ^TM^{-1/2}(w)$ for any coefficient matrix $Q$ and any $w>0$. Similarly as in Theorem \ref{tEigLap}, the value $\Phi(w)$ is determined by the eigenvalues of $\L_w$.
\end{proof}

Note that Theorem 5 of \cite{RosaHarman16}, which states that if $QQ^T$ is completely symmetric, then $\bar{w}$ is $\Phi$-optimal for any orthogonally invariant criterion, is a corollary of Theorem \ref{tOrthOptGen} here. If $QQ^T$ is completely symmetric, i.e., if $QQ^T = aI_v +bJ_v$ for some $a,b$, then \eqref{ePerm} is satisfied for any permutation matrix and thus for any cyclic permutation.
However, the completely symmetric systems do not cover all systems satisfying Theorem \ref{tOrthOptGen}, as demonstrated in the following examples.

\begin{example}\label{exTwoSets}
	Let $Q^T\tau$ be a system of contrasts for comparing two sets of treatments of equal size (say $g$), i.e., $\tau_j-\tau_i$, $i=1,\ldots,g$, $j=g+1,\ldots,2g$. This is a special case of \emph{comparing $v-g$ treatments with $g$ controls}, where $v-g=g$, see, e.g., \cite{Majumdar86} or \cite{GithinjiJacroux}.
	
	Then, $Q=(-I_g \otimes 1_{g}, 1_{g} \otimes I_{g})^T$ and 
	$$
	QQ^T = \begin{bmatrix}
	gI_g & -J_g \\ -J_g & gI_g
	\end{bmatrix}.
	$$
	Let $\pi$ be the cyclic permutation $\pi: \pi(i) = g+i$ for $i\leq g$ and $\pi(i) = i-g+1$ for $i>g$, i.e., $\pi$ can be represented by the cycle  $(1,g+1,2,g+2,3,g+3,\ldots,g,2g)$ (see Figure \ref{fCwCsymm}).
	It can be verified that $P_\pi QQ^T P_\pi^T = QQ^T$ and therefore, $\bar{w}$ is $\Phi$-optimal for estimating $Q^T\tau$ with respect to any orthogonally invariant criterion.
\end{example}

\begin{example}\label{exAllPairwise}
	Let $Q^T\tau$ be a system of contrasts $\tau_2-\tau_1$, $\tau_3-\tau_2, \ldots, \tau_v-\tau_{v-1}$, $\tau_1-\tau_v$. Then, $Q^T$ is given by rotations of its first row $(-1,1,0_{v-2}^T)$ and similarly, $QQ^T$ is given by rotations of its first row $(2,-1,0_{v-3}^T,-1)$ 
	$$
	Q^T=\begin{bmatrix}
	-1 & 1 & 0 & \ldots & 0 \\
	0 & -1 & 1 & \ldots & 0 \\
	\vdots & & & & \vdots \\
	0 & \ldots & 0 & -1 & 1 \\
	1 & 0 & \ldots & 0 & -1
	\end{bmatrix},\quad
	QQ^T\begin{bmatrix}
	2 & -1 & 0 & \ldots & 0 & -1 \\
	-1 & 2 & -1 & 0 & \ldots & 0 \\
	\vdots & & & & & \vdots \\
	0 & \ldots & 0 & -1 & 2 & -1 \\
	-1 & 0 & \ldots & 0 & -1 & 2
	\end{bmatrix}.
	$$
	Clearly, $QQ^T$ satisfies \eqref{ePerm}, where $\pi=(1,2,3,\ldots,v)$, i.e., $\pi(i)=i+1$ for $i<v$ and $\pi(v)=1$ (see Figure \ref{fCyclic}) and thus $\bar{w}$ is $\Phi$-optimal for estimating $Q^T\tau$ with respect to any orthogonally invariant criterion.
\end{example}

\begin{figure}[h]
	\centering
	\begin{subfigure}[b]{0.4\textwidth}
		\epsfig{file=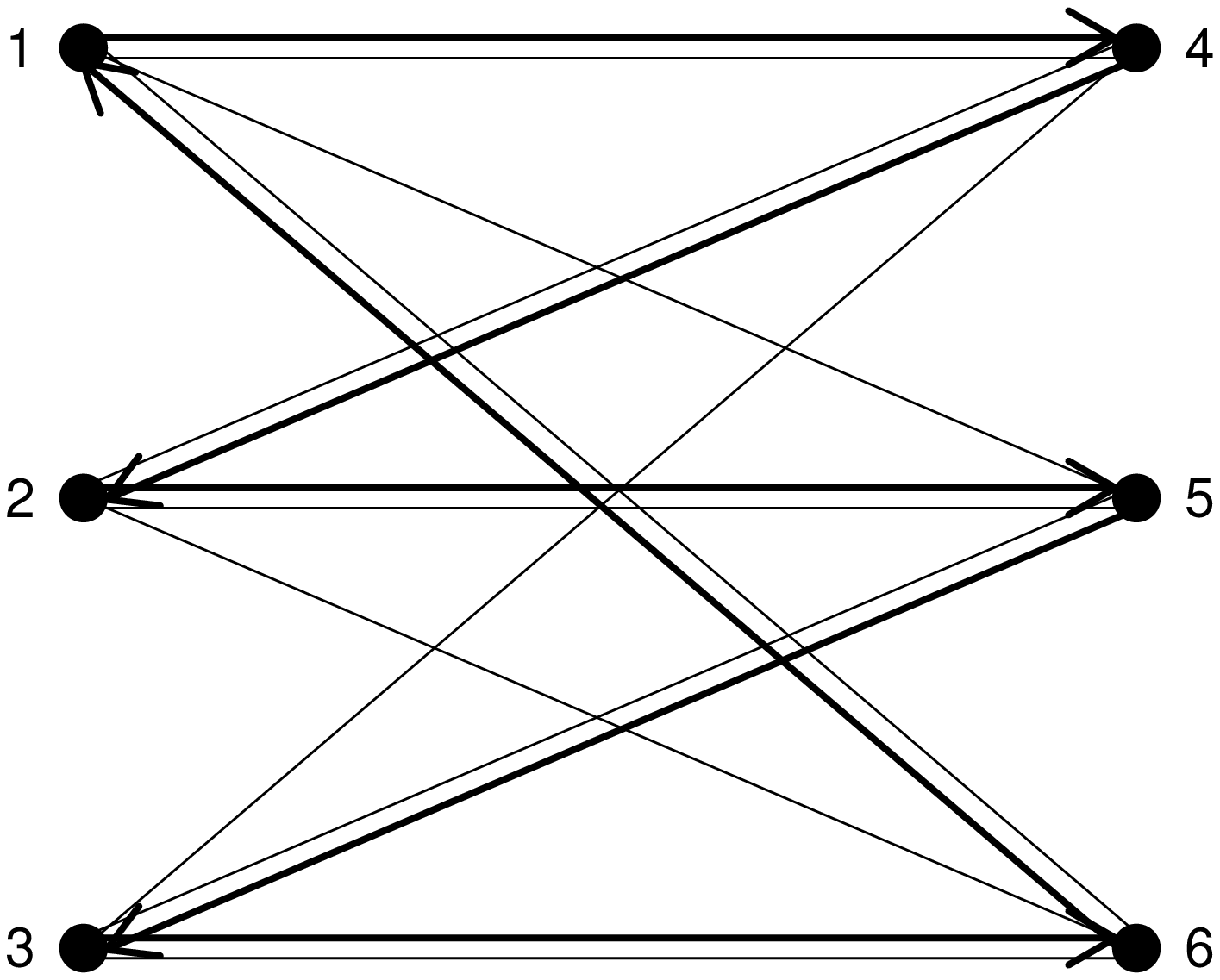, height=7cm}
		\vspace{-40pt}
		\caption{Comparing 3 treatments with 3 controls}
		\label{fCwCsymm}
	\end{subfigure}
	~ 
	\begin{subfigure}[b]{0.4\textwidth}
		\epsfig{file=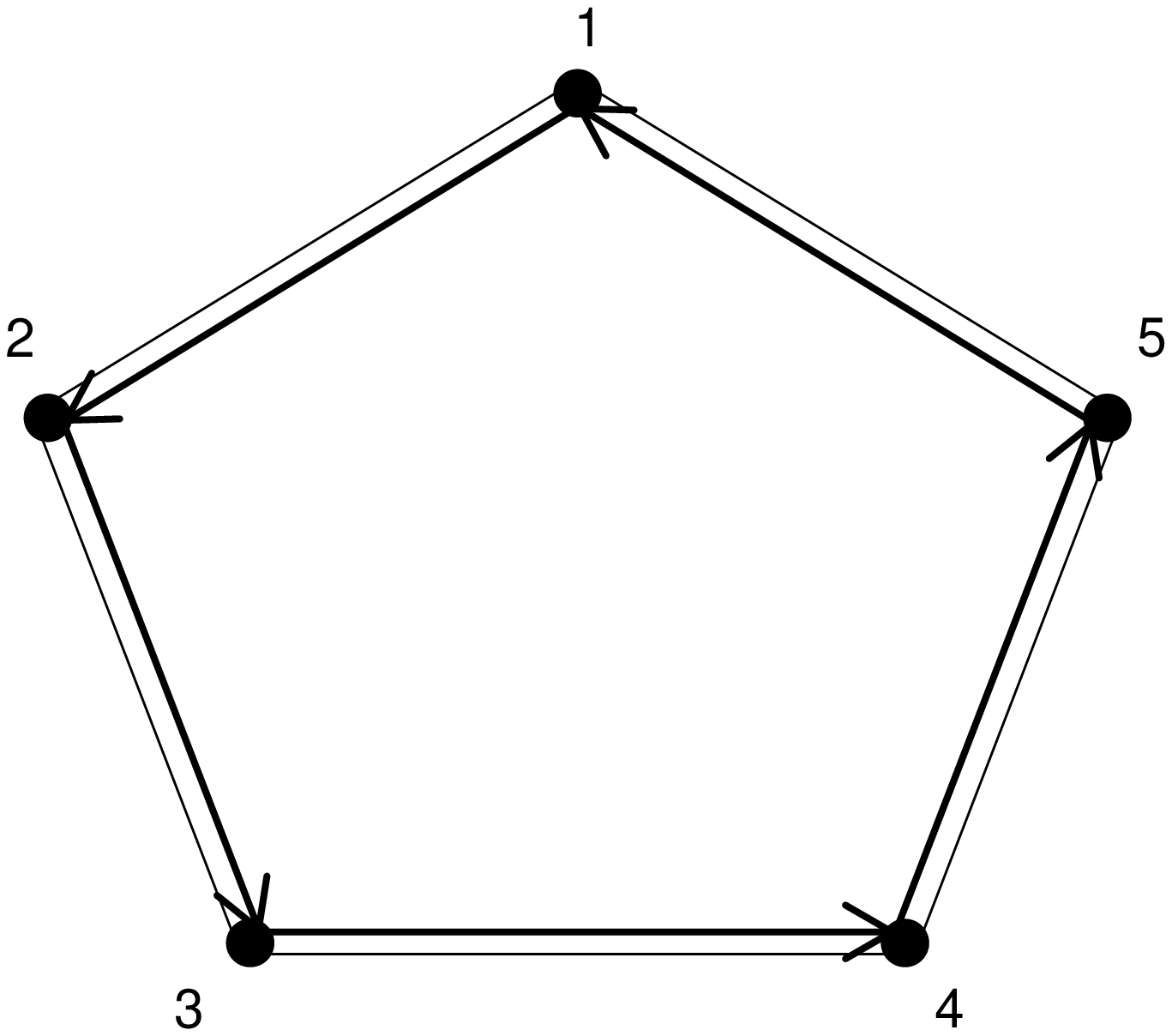, height=7cm}
		\vspace{-40pt}
		\caption{Cyclic comparisons \\ \phantom{a}}
		\label{fCyclic}
	\end{subfigure}
	\caption{Graph representations of systems of contrasts; the directions of the edges are suppressed as they do not affect the results. The cyclic automorphisms are represented by bold arrows.}\label{fSymmetric}
\end{figure}

If $\pi$ is an arbitrary permutation consisting of cycles $c_1, \ldots, c_K$ that satisfies \eqref{ePerm}, the proof of Theorem \ref{tOrthOptGen} can be replicated, resulting in analogous results for each cycle $c_i$.

\begin{theorem}\label{tOptSets}
	Let $Q^T\tau$ be a system of contrasts and let $\pi$ be a permutation satisfying \eqref{ePerm} that consists of cycles $c_1, \ldots, c_K$, i.e., $\pi=c_1 \ldots c_K$. Let $\Phi$ be an orthogonally invariant information function. Then, there exists a $\Phi$-optimal design $w$ that satisfies $w(i)=w(j)$ for all $i,j \in c_k$ for all $k \in \{1,\ldots,K\}$.
\end{theorem}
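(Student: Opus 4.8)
The plan is to reuse the orbit-averaging (symmetrization) argument from the proofs of Theorem \ref{tCyclic} and Theorem \ref{tOrthOptGen}, but now averaging over the entire cyclic group generated by $\pi$ instead of a single cycle. The backbone is the transformation rule established in the discussion preceding Theorem \ref{tOrthOptGen}: since $P_\pi$ is orthogonal, condition \eqref{ePerm} is equivalent to $P_\pi^T QQ^T P_\pi = QQ^T$, and using $M^{-1/2}(P_\pi w) = P_\pi M^{-1/2}(w) P_\pi^T$ one checks that the formal Laplacian $\mathcal{L}_w := M^{-1/2}(w) QQ^T M^{-1/2}(w)$ satisfies $\mathcal{L}_{P_\pi w} = P_\pi \mathcal{L}_w P_\pi^T$. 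Hence $\mathcal{L}_{P_\pi w}$ and $\mathcal{L}_w$ are orthogonally similar and share their spectra, and because $\Phi(w)$ is determined by the eigenvalues of $\mathcal{L}_w$ (as in the proof of Theorem \ref{tOrthOptGen}), it follows that $\Phi(P_\pi^\ell w) = \Phi(w)$ for every feasible $w$ and every power $\ell \in \mathbb{N}$.

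First I would fix a $\Phi$-optimal design $w^*$; such a design exists because $\Phi$, being an information function, is upper semicontinuous and its maximum over the feasible designs is attained. Let $L := \mathrm{lcm}(|c_1|, \ldots, |c_K|)$ be the order of $\pi$, so that $P_\pi^L = I_v$, and define the symmetrized design
$$\tilde{w} := \frac{1}{L}\sum_{\ell=0}^{L-1} P_\pi^\ell w^*.$$
Each $P_\pi^\ell w^*$ is a permutation of a positive probability vector, hence itself a feasible design, so $\tilde{w}$ is a convex combination of feasible designs and is therefore feasible.

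Next I would combine the invariance from the first step with concavity of the map $w \mapsto \Phi(w)$:
$$\Phi(\tilde{w}) = \Phi\Big(\tfrac{1}{L}\sum_{\ell=0}^{L-1} P_\pi^\ell w^*\Big) \geq \tfrac{1}{L}\sum_{\ell=0}^{L-1}\Phi(P_\pi^\ell w^*) = \tfrac{1}{L}\sum_{\ell=0}^{L-1}\Phi(w^*) = \Phi(w^*).$$
Since $w^*$ is optimal, this forces $\Phi(\tilde{w}) = \Phi(w^*)$, so $\tilde{w}$ is $\Phi$-optimal as well. It then remains to verify that $\tilde{w}$ is constant on each cycle, which is the only genuinely combinatorial step and the one I expect to need the most care. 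The averaging operator $\frac{1}{L}\sum_{\ell=0}^{L-1} P_\pi^\ell$ is the orthogonal projector onto the subspace of vectors fixed by $P_\pi$, and a vector is $P_\pi$-invariant exactly when it is constant on every cycle of $\pi$. Concretely, as $\ell$ runs through $0,\ldots,L-1$ the indices appearing in the $i$-th coordinate of $P_\pi^\ell w^*$ visit each vertex of the cycle $c_k \ni i$ the same number of times ($L/|c_k|$ times), so $\tilde{w}_i = \frac{1}{|c_k|}\sum_{j \in c_k} w^*_j$, a quantity depending only on $c_k$ and not on the particular $i \in c_k$. Thus $\tilde{w}(i) = \tilde{w}(j)$ for all $i,j \in c_k$ and all $k$, which completes the proof. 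The main obstacle is precisely this last bookkeeping: ensuring that the chosen period $L$ is a common multiple of all the cycle lengths, so that the orbit average is genuinely uniform within each cycle, rather than any analytic subtlety.
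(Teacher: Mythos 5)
Your proposal is correct and is essentially the paper's own argument: the paper proves Theorem \ref{tOptSets} by simply replicating the symmetrization proof of Theorems \ref{tCyclic} and \ref{tOrthOptGen}, i.e., averaging over the powers of $P_\pi$ using $\Phi(P_\pi^\ell w) = \Phi(w)$ (from \eqref{ePerm} and orthogonal similarity of the formal Laplacians) together with concavity, exactly as you do, with the orbit average being constant on each cycle $c_k$. Your additional bookkeeping with $L = \mathrm{lcm}(|c_1|,\ldots,|c_K|)$ and the observation that each $i \in c_k$ receives the cycle average $\frac{1}{|c_k|}\sum_{j \in c_k} w_j^*$ is precisely the detail the paper leaves implicit, so nothing further is needed.
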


Note that in the case of a strictly concave criterion $\Phi$, the conditions of Theorem \ref{tOptSets} are also necessary conditions of optimality; similarly for Theorem \ref{tOrthOptGen}. That is, any $\Phi$-optimal design must be uniform on each of the given cycles $c_k$. 

Theorem \ref{tOptSets} can be used to simplify the search for optimal treatment proportions. For example, let $\Phi$ be orthogonally invariant and consider comparing a set of test treatments with a set of controls. Then, to find a $\Phi$-optimal treatment design $w$, it is sufficient to consider only two treatment proportions: one for the test treatments and one for the controls, as shown in Example \ref{exCwControls}. For the Kiefer's optimality criteria $\Phi_p$, these optimal treatment proportions are given in Theorem 6 of \cite{RosaHarman16}.

\begin{example}\label{exCwControls}
	Let $Q^T\tau$ be a system of contrasts for comparing $v-g$ treatments with $g$ controls, $g<v/2$, i.e., $\tau_j-\tau_i$, $i=1,\ldots,g$, $j=g+1,\ldots,v$. Then, $Q=(-I_g \otimes 1_{v-g}, 1_{g} \otimes I_{v-g})^T$ and
	$$
	QQ^T = \begin{bmatrix}
	(v-g)I_g & -J_{g \times (v-g)} \\ -J_{(v-g) \times g} & gI_{v-g}
	\end{bmatrix}.
	$$
	Let $\pi:(1,\ldots,g)(g+1,\ldots,v)$, i.e., $\pi(i)=i+1$ for $i \neq g, i \neq v$; $\pi(g)=1$ and $\pi(v)=g+1$. Then, $\pi$ satisfies \eqref{ePerm}. It follows that for any orthogonally invariant criterion $\Phi$ there exists a $\Phi$-optimal design satisfying $w(i)=\gamma$ for all $i \in \{1,\ldots,g\}$ and $w(j) = (1-g\gamma)/(v-g)$ for all $j \in \{g+1,\ldots,v\}$, for some $\gamma \in (0,1/g)$. 
\end{example}

\bibliographystyle{plainnat}
\bibliography{C:/BibTeX/rosa.bib}

\end{document}